\documentclass{gtart}


\usepackage[inner=20mm, outer=20mm, textheight=245mm]{geometry}

\usepackage{psfrag, graphicx, subfigure, epsfig,color}

\usepackage{amsmath, amssymb, latexsym, euscript, amsthm}

\usepackage{mathptmx, wrapfig}

\usepackage{mathrsfs}

\usepackage{booktabs}

\usepackage[small,bf]{caption}
\setlength{\captionmargin}{25pt}

\usepackage[T1]{fontenc}

\def\tri{\mathcal{T}}
\newcommand*{\dittostraight}{---\textquotedbl---}

\theoremstyle{plain}
\newtheorem{theorem}{Theorem}
\newtheorem*{theorem*}{Theorem}
\newtheorem{lemma}[theorem]{Lemma}
\newtheorem{proposition}[theorem]{Proposition}

\newtheorem*{claim*}{Claim}

\theoremstyle{definition}
\newtheorem{definition}[theorem]{Definition}
\newtheorem*{definition*}{Definition}

\theoremstyle{remark}

\numberwithin{equation}{section}


\begin{document}
 
\title{Determining the trisection genus of orientable and non-orientable PL $4$-manifolds through triangulations\footnote{An extended abstract of this paper appeared in the Proceedings of the 34th International Symposium on Computational Geometry (SoCG 2018), Budapest, June 11--14, 2018 \cite{socg}.}}
\author{Jonathan Spreer and Stephan Tillmann}

\begin{abstract}
Gay and Kirby recently introduced the concept of a trisection for arbitrary smooth, oriented closed $4$-manifolds, and with it a new topological invariant, called the trisection genus.
This paper improves and implements an algorithm due to Bell, Hass, Rubinstein and Tillmann to compute trisections using triangulations, and extends it to non-orientable 4--manifolds. 
Lower bounds on trisection genus are given in terms of Betti numbers and used to determine the trisection genus of all standard simply connected PL $4$-manifolds.
In addition, we construct trisections of small genus directly from the simplicial structure of triangulations using the Budney-Burton census of closed triangulated $4$-manifolds. These experiments include the construction of minimal genus trisections of the non-orientable $4$-manifolds $S^3 \tilde{\times} S^1$ and $\mathbb{R}P^4$.
\end{abstract}

\primaryclass{57Q15; 57N13, 14J28, 57R65}

\keywords{combinatorial topology, triangulated manifolds, simply connected 4-manifolds, K3 surface, trisections of 4-manifolds, handlebodies, algorithms, experiments}

\maketitle


\section{Introduction}

Gay and Kirby's construction of a \emph{trisection} for arbitrary smooth, oriented closed $4$-manifolds~\cite{GK} defines a decomposition of the $4$-manifold into three $4$-dimensional handlebodies\footnote{A $d$-dimensional handlebody (or, more precisely, $1$-handlebody) is the regular neighbourhood of a graph embedded into Euclidean $d$-space.} glued along their boundaries in the following way: Each handlebody is a boundary connected sum of copies of $S^1 \times B^3,$ and has boundary a connected sum of copies of $S^1 \times S^2$ (here, $B^i$ denotes the $i$-dimensional ball and $S^j$ denotes the $j$-dimensional sphere). The triple intersection of the $4$-dimensional handlebodies is a closed orientable surface $\Sigma$, called the {\em central surface}, which divides each of their boundaries into two $3$-dimensional handlebodies (and hence is a Heegaard surface). These $3$-dimensional handlebodies are precisely the intersections of pairs of the $4$-dimensional handlebodies.

A trisection naturally gives rise to a quadruple of non-negative integers $(g; g_0, g_1, g_2)$, encoding the genus $g$ of the central surface $\Sigma$ and the genera $g_0,$ $g_1,$ and  $g_2$ of the three $4$-dimensional handlebodies. The \emph{trisection genus} of $M,$ denoted $g(M),$ is the minimal genus of a central surface in any trisection of $M.$ A trisection with $g(\Sigma) = g(M)$ is called a \emph{minimal genus trisection}.

For example, the standard $4$-sphere has trisection genus equal to zero, the complex projective plane equal to one, and $S^2\times S^2$ equal to two. Meier and Zupan~\cite{MZ-standard} showed that there are only six orientable $4$-manifolds of trisection genus at most two. An extended set of examples of trisections of $4$-manifolds can be found in \cite{GK}. The recent works of Gay \cite{Gay2015}, and Meier, Schirmer and Zupan~\cite{MSZ, MZ-bridge, MZ-standard} give some applications and constructions arising from trisections of $4$-manifolds and relate them to other structures on $4$-manifolds.

A key feature is the so-called \emph{trisection diagram} of the $4$-manifold, comprising three sets of simple closed curves on the $2$-dimensional central surface from which the $4$-manifold can be reconstructed and various invariants of the $4$-manifold can be computed. This is particularly interesting in the case where the central surface is of minimal genus, giving a minimal representation of the $4$-manifold.

An approach to trisections using triangulations is given by Rubinstein and the second author in~\cite{Rubinstein-multisections-2016}, and used by Bell, Hass, Rubinstein and the second author in~\cite{Bell-computing-2017} to give an algorithm to compute trisection diagrams of $4$-manifolds, using a modified version of the input triangulation. The approach uses certain partitions of the vertex set (called \emph{tricolourings}), and the resulting trisections are said to be \emph{supported} by the triangulation.
We combine this framework with a greedy-type algorithm for collapsibility to explicitly calculate trisections from existing triangulations of $4$-manifolds. In doing so we are able to prove the following statement about the \emph{K3 surface}, a $4$-manifold that can be described as a quartic in $\mathbb{C}P^3$ given by the equation
\[ z_0^4 + z_1^4 + z_2^4 + z_3^4 =0.\]

\begin{theorem}
  \label{thm:main}
  The trisection genus of the $K3$ surface is $22$, that is, it is equal to its second Betti number.
\end{theorem}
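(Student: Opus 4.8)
The plan is to prove the theorem by establishing matching lower and upper bounds on $g(K3)$. Recall that the $K3$ surface is simply connected with second Betti number $b_2 = 22$, and hence has Euler characteristic $\chi(K3) = 2 + b_2 = 24$. Throughout I denote a trisection by $(g; g_0, g_1, g_2)$ as in the introduction, with $4$-dimensional pieces $X_0, X_1, X_2$.

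For the lower bound I would compute the Euler characteristic of a trisection by inclusion--exclusion over the pieces described in the introduction. The three $4$-dimensional handlebodies satisfy $\chi(X_i) = 1 - g_i$, their three pairwise intersections are $3$-dimensional handlebodies of genus $g$ with $\chi = 1 - g$, and the central surface $\Sigma$ has $\chi = 2 - 2g$. This yields
\[ \chi(M) = 2 + g - (g_0 + g_1 + g_2). \]
Since each $g_i \geq 0$, every trisection of $K3$ satisfies $g \geq \chi(K3) - 2 = 22$; this is precisely the Betti number bound in the simply connected case. Moreover equality $g = 22$ forces $g_0 = g_1 = g_2 = 0$, so a minimal genus trisection of $K3$ must have all three $4$-dimensional pieces equal to the $4$-ball.

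For the upper bound I would construct such a $(22; 0, 0, 0)$ trisection directly from a triangulation. Starting from an explicit triangulation $\tri$ of $K3$, I would search for a tricolouring $V = V_0 \sqcup V_1 \sqcup V_2$ of its vertices for which all three pieces of the supported trisection are $4$-balls. In the framework of Bell, Hass, Rubinstein and the second author each piece $X_i$ is a regular neighbourhood of a subcomplex $C_i$ of $\tri$ determined by the colouring; if $C_i$ collapses to a point then $X_i$ is a $4$-ball and hence $g_i = 0$. This is exactly where the greedy collapsibility algorithm enters: I would run it on each $C_i$, and if all three collapse to a point, conclude that $g_0 = g_1 = g_2 = 0$. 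The Euler characteristic identity above then forces the central surface to have genus $22$, so no separate genus computation is needed, and together with the lower bound this gives $g(K3) = 22$.

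The main obstacle lies entirely on the upper bound side. The lower bound is immediate, whereas producing a tricoloured triangulation whose three pieces are simultaneously collapsible is delicate: generic triangulations and generic colourings support trisections of genus well above the minimum, and collapsibility of a piece may fail to be detected, since recognising collapsibility is subtle and the greedy algorithm is only a heuristic that can stall at a non-trivial remainder. The crux is therefore to exhibit one good pair $(\tri, V)$ --- a sufficiently efficient triangulation of $K3$ together with a colouring adapted to it --- and to verify by running the algorithm that each $C_i$ collapses to a point. This verification is a finite, checkable computation, and its success is what upgrades the elementary bound $g(K3) \geq 22$ into the equality claimed by the theorem.
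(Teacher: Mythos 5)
Your lower bound argument is correct and is essentially the paper's Lemma~\ref{lem:g} specialised to the simply connected case: the inclusion--exclusion identity $\chi(M) = 2 + g - (g_0+g_1+g_2)$ gives $g = \beta_2 + (g_0+g_1+g_2) \geq \beta_2 = 22$ for the $K3$ surface, and equality indeed forces the three $4$-dimensional pieces to be balls. The overall strategy for the upper bound (an explicit tricoloured triangulation of $K3$ supporting a $(22;0,0,0)$-trisection, verified by a finite collapsibility computation) also matches the paper, which uses a triangulation coming from a simple crystallisation of $K3$ due to Basak and Spreer.

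However, there is a genuine gap in your upper bound: you only ever check that the three $4$-dimensional pieces are $4$-balls, i.e.\ that the vertex-spanned graphs $\Gamma_i$ (your $C_i$) collapse to points. In the Bell--Hass--Rubinstein--Tillmann framework this is not sufficient for the pulled-back decomposition to be a trisection. The construction yields a trisection only if, in addition, each of the three $3$-dimensional pieces -- the pre-images of the interior $1$-cubes of $\Delta^2$ -- is a $3$-dimensional handlebody, which is verified by collapsing its $2$-dimensional spine $\gamma_i$ (the complex of $1$- and $2$-cubes sitting in bicoloured triangles and tetrahedra) onto a $1$-dimensional complex. This is the difference between a c-tricolouring and a ts-tricolouring. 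Without that check, conditions (3) and (4) of Definition~\ref{def:multisection} may fail, the decomposition need not be a trisection at all, and no upper bound on $g(K3)$ follows; your Euler characteristic identity then has nothing to apply to. Worse, this missing check is precisely the entire computational content of the paper's proof: for a triangulation coming from a simple crystallisation, each colour class spans a single vertex or a single edge, so the $4$-dimensional pieces are automatically balls (Lemma~\ref{lem:sc}), and the greedy collapsing algorithm is needed exactly for the $2$-complexes $\gamma_i$ -- collapsibility of a graph is a triviality (connected with $\chi=1$), so running a heuristic collapser on your $C_i$ accomplishes nothing delicate. The census data of Section~\ref{ssec:expts} shows the distinction is not vacuous: among the $1\,100$ c-tricolourings found there, $13$ fail to be ts-tricolourings because some $\gamma_i$ is disconnected or does not collapse to a $1$-complex.
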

The proof of the theorem consists of combining a theoretical lower bound on trisection genus in terms of Betti numbers (see Lemma~\ref{lem:g}) with an explicit computation of a trisection realising this lower bound.
The algorithms of \cite{Bell-computing-2017} can in particular be applied to a minimal trisection of the $K3$ surface to compute a minimal trisection diagram. This straightforward application is not done in this paper. Instead we focus on the following application of our result.

We already know that the trisection genera of $\mathbb{C}P^2$ and $S^2 \times S^2$ are equal to their respective second Betti numbers. Moreover, the second Betti number is additive, and the trisection genus is subadditive under taking connected sums. It thus follows from Theorem~\ref{thm:main} and Lemma~\ref{lem:g} that the trisection genus of every $4$-manifold which is a connected sum of arbitrarily many (PL standard) copies of $\mathbb{C}P^2$, $S^2 \times S^2$ and the $K3$ surface must be equal to its second Betti number. This family includes the standard $4$-sphere as the empty connected sum.

We refer to each member of this family of manifolds as a {\em standard simply connected PL $4$-manifold}.
Due to work by Freedman \cite{Freedman82Top4DimMnf}, Milnor and Husemoller \cite{Milnor73SymmBilForms}, Donaldson \cite{Donaldson83GaugeTheory4Mflds}, Rohlin \cite{Rohlin84NewResults4Mflds}, and Furuta \cite{Furuta01MonopoleEq}, each of these manifolds must be homeomorphic to one of
$$ k (\mathbb{C}P^2) \# m (\overline{\mathbb{C}P^2}) , \,\,  k (K3) \# m (S^2 \times S^2) , \textrm{ or }  k (\overline{K3}) \# m (S^2 \times S^2) , \quad  k,l,m,r \geq 0,$$ where $\overline{X}$ denotes $X$ with the opposite orientation.
 Furthermore, modulo the $11/8$-conjecture, standard simply connected PL $4$-manifolds comprise all topological types of PL simply connected $4$-manifolds. See \cite[Section~5]{Saveliev12Lectures} for a more detailed discussion of the above statements, and see Section~\ref{ssec:mfds} for more details about $4$-manifolds.

By analysing a particular family of singular triangulations of $\mathbb{C}P^2$, $S^2 \times S^2$ and the $K3$ surface due to Basak and the first author \cite{BaSCryst}, we are able to prove the following even stronger statement.

\begin{theorem}
  \label{thm:2}
  Let $M$ be a standard simply connected PL $4$-manifold. Then $M$ has a trisection of minimal genus supported by a singular triangulation of~$M$.
\end{theorem}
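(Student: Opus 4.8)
The plan is to reduce the statement to a verification for the three summand manifolds together with a simplicial model of connected sum. Combining Theorem~\ref{thm:main}, Lemma~\ref{lem:g}, and the subadditivity of trisection genus recorded above, we already know that $g(M) = b_2(M)$ for every standard simply connected PL $4$-manifold $M$. Hence it suffices to exhibit, for each such $M$, a singular triangulation $\mathcal{T}$ of $M$ together with a tricolouring of its vertex set whose supported trisection has central surface of genus exactly $b_2(M)$; minimality is then automatic.

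First I would treat the summands $\mathbb{C}P^2$, $S^2 \times S^2$, and the $K3$ surface individually. For each I would take the corresponding member of the Basak--Spreer family \cite{BaSCryst} and write down an explicit tricolouring of its vertices. Using the machinery of \cite{Bell-computing-2017}, I would check that the three colour classes determine pieces whose complementary subcomplexes each collapse onto a spine --- for which the greedy collapsibility procedure developed in this paper is the natural tool --- so that the pieces are genuine $4$-dimensional handlebodies and the colouring indeed supports a trisection. I would then read the genus of the central surface off the colouring data by a simplex count and confirm that it equals $1$, $2$, and $22$ respectively, matching $b_2$ in each case. These are finite, computer-checkable verifications.

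The remaining step, which I expect to be the crux, is to realise connected sums simplicially while preserving the supported structure and adding genera. Concretely, I would glue two tricoloured triangulations, each supporting a minimal genus trisection, along the stars of two vertices of matching colour, producing a tricoloured singular triangulation of the connected sum; the goal is that its supported trisection has central genus the sum of the two summand genera. Since $b_2$ is additive and trisection genus subadditive, this again equals $b_2(M)$ and forces the new trisection to be both minimal and supported, and reversing orientation ($\overline{X}$) leaves the combinatorial construction unchanged, so the single gluing recipe covers all orientation types in the classification. The delicate point is guaranteeing that the gluing preserves the collapsibility (handlebody) conditions on all three pieces simultaneously: the local modification at the gluing site must not destroy the spines of the summand pieces, and the two central surfaces must be joined so that their genera simply add rather than interact. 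Verifying this --- equivalently, that the simplicial connected sum realises the standard connected sum of trisections in the sense of \cite{Rubinstein-multisections-2016} --- is where the real work lies, whereas the three base computations are by comparison routine finite checks.
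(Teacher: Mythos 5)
Your overall outline---verify the three prime summands on the Basak--Spreer crystallisations by explicit collapsibility checks, then propagate to connected sums by a simplicial gluing---is the same as the paper's, and your reduction via Lemma~\ref{lem:g} (minimality is automatic once a supported trisection of genus $\beta_2(M)$ is exhibited) is sound. But there is a genuine gap at exactly the point you flag as ``where the real work lies'': you never supply the argument that the glued triangulation still supports a trisection, and the gluing recipe you do specify cannot work as stated. You propose to glue two tricoloured triangulations along the stars of two vertices of matching colour. In a triangulation coming from a simple crystallisation every pentachoron contains all five vertices, so the star of any vertex is the \emph{entire} complex, and its removal leaves nothing to glue; moreover, deleting one vertex from each summand would destroy the $1{+}2{+}2$ colour pattern that makes the vertex partition a tricolouring. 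The paper instead removes one \emph{pentachoron} from each summand and glues along the boundary tetrahedra, identifying the two $5$-element vertex sets colour by colour. This keeps the result a triangulation associated to a simple crystallisation, so Lemma~\ref{lem:sc} applies to the connected sum directly and yields the $(\beta_2;0,0,0)$ type and the genus count for free---no separate argument that ``the genera simply add'' is needed.

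The second missing idea is how collapsibility survives the gluing. For a simple-crystallisation triangulation, the two $3$-dimensional trisection submanifolds meeting the singleton colour class retract to graphs automatically; the only thing in question is whether the $2$-complex $Q_{rg}$ dual to the two doubleton colour classes collapses to a $1$-complex. The paper's device is to choose \emph{which} pentachoron to remove using the collapsing sequences themselves: remove from $\mathcal{T}_1$ a pentachoron containing the quadrilateral of $Q_{rg}(\mathcal{T}_1)$ that is collapsed \emph{last}, remove from $\mathcal{T}_2$ a pentachoron containing the quadrilateral of $Q_{rg}(\mathcal{T}_2)$ that is collapsed \emph{first}, and glue so that the edges through which these two quadrilaterals are collapsed are aligned (swapping the colours $r$ and $g$ of $\mathcal{T}_2$ if necessary). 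The two collapsing sequences then concatenate into a collapsing sequence for $Q_{rg}(\mathcal{T}_1 \# \mathcal{T}_2)$, and this is repeated for each of the $15$ tricolourings. Without this (or an equivalent) mechanism your induction step is an assertion, not a proof; appealing to \cite{Rubinstein-multisections-2016} for a ``standard connected sum of trisections'' does not help, because what must be verified is a property of the specific simplicial structure---that the pulled-back pieces are handlebodies, i.e.\ that the colouring is a ts-tricolouring---not a statement about trisections up to isotopy.
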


The singular triangulations used as building blocks (connected summands) for this construction are all highly regular and come from what is called {\em simple crystallisations} (see Section~\ref{ssec:simple} for details). They are explicitly listed in Appendix~\ref{app:isosigs}.

\medskip

\textbf{Experiments on orientable 4-manifolds.} Complementing the theoretical part of the article, we run a variant of the algorithm from~\cite{Bell-computing-2017} on the $440\,495$ triangulations of the $6$-pentachoron census of closed orientable $4$-manifolds~\cite{4d-census}. The algorithm takes as input a triangulation and searches for trisections which can be directly constructed from the simplicial structure of the triangulation. Unlike in the algorithm from~\cite{Bell-computing-2017} the structure of the triangulation is not altered. Accordingly, if trisections can be constructed this way, they are typically of small genus. 

Surprisingly, while only $445$ of these triangulations admit the construction of at least one trisection supported by its simplicial structure, some triangulations from the census admit as many as $48$ of them.

\medskip

\textbf{Experiments on non-orientable 4-manifolds.} The definition and existence result of trisections for PL 4--manifolds due to Rubinstein and the second author in~\cite{Rubinstein-multisections-2016} does not require the 4--manifold to be orientable. Likewise, our algorithm does not require the triangulation to be orientable and hence is able to construct trisections of non-orientable $4$-manifolds (see Definition~\ref{def:multisection} which defines trisections for both oriented and non-orientable $4$-manifolds). Going through the $60\,413$ triangulations of the $6$-pentachoron census of closed non-orientable $4$-manifolds~\cite{4d-census} we describe $13$ triangulations -- four of $S^3 \tilde{\times} S^1$ and nine of $\mathbb{R}P^4$ -- admitting at least one tricolouring on their sets of vertices without mono-chromatic triangles. Among these $13$ {\em tricolourable} triangulations, four admit the construction of a trisection compatible with their simplicial structure: two triangulations of $S^3 \tilde{\times} S^1$ give rise to a trisection of type $(2;1,1,1)$ and two triangulations of $\mathbb{R}P^4$ give rise to a trisection of type $(4;1,1,1)$. In both cases the trisections can be shown to be of minimal genus and hence determine the genus of the respective $4$-manifolds, again appealing to a theoretical lower bound in terms of Betti numbers (Lemma~\ref{lem:g}) and computing an explicit trisection that achieves this lower bound.

It follows from~\cite{Bell-computing-2017} that applying bistellar $2$-$4$-moves\footnote{A bistellar $2$-$4$-move replaces a pair of pentachora glued along a common tetrahedron by a collection of four pentachora glued around a common edge. Accordingly, each $2$-$4$-move increases the number of pentachora in the triangulation by two.} to a tricolourable triangulation eventually yields a triangulation such that its simplicial structure is compatible with a trisection. We apply this strategy to construct trisections from of all remaining nine tricolourable census-triangulations. Some of the trisections obtained in this way are of minimal genus.

\medskip

\textbf{Overview.} The paper is organised as follows. In Section~\ref{sec:prelims} we briefly go over some basic concepts used in the article, some elementary facts on trisections, as well as how to construct trisections from triangulations of $4$-manifolds. In Section~\ref{sec:implementation} we give details on the algorithm to compute trisections from a given triangulation and we present data obtained from running the algorithm on the $6$-pentachora census of closed (orientable and non-orientable) $4$-manifolds. In Section~\ref{sec:simply} we then prove Theorems~\ref{thm:main} and~\ref{thm:2}.

\subsection*{Acknowledgements}

Research of the first author was supported by the Einstein Foundation (project ``Einstein Visiting Fellow Santos'').
Research of the second author was supported in part under the Australian Research Council's Discovery funding scheme (project number DP160104502). The second author thanks the DFG Collaborative Center SFB/TRR 109 at TU Berlin, where parts of this work have been carried out, for its hospitality.


\section{Preliminaries}
\label{sec:prelims}

\subsection{Manifolds and triangulations}
\label{ssec:mfds}

We assume basic knowledge of geometric topology and in particular manifolds. For a definition of homology groups and Betti numbers of a manifold see \cite{Hatcher2002AlgTop}, for an introduction into simply connected $4$-manifolds and their intersection forms, see \cite{Saveliev12Lectures}.

In dimensions $\le 4$, there is a bijective correspondence between isotopy classes of smooth and piecewise linear structures~\cite{C1, C2}. In this paper, all manifolds and maps are assumed to be piecewise linear (PL) unless stated otherwise.
Our results apply to any compact smooth manifold by passing to its unique piecewise linear structure~\cite{Whitehead1940}.

Recall that a manifold $M$ is {\em simply connected} if every closed loop in $M$ can be contracted to a single point (in other words, the fundamental group of $M$ is trivial). While in dimensions $2$ and $3$ the only simply connected closed manifolds are the $2$- and $3$-sphere respectively, in dimension $4$ there are infinitely many such manifolds.

Given two oriented simply connected $4$-manifolds $M$ and $N$, their {\em connected sum}, written $M \# N$, is defined as follows. First remove a $4$-ball from both $M$ and $N$ and then glue the resulting manifolds along their boundaries via an orientation reversing homeomorphism. The resulting manifold is again oriented. The second Betti number is additive and the property of being simply connected is preserved under taking connected sums.

In this article, simply connected $4$-manifolds are presented in the form of {\em singular triangulations}.
A singular triangulation $\tri$ of a (simply connected) $4$-manifold $M$
is a collection of $2n$ abstract pentachora together with $5n$ {\em gluing maps} identifying their $10n$ tetrahedral faces
in pairs such that the underlying set of the quotient space $|\tri|$ is PL-homeomorphic to $M$.
The gluing maps generate an equivalence relation on the faces of the pentachora, and an equivalence class of faces is referred to as a single face of the triangulation $\tri$. Faces of dimension zero are called {\em vertices} and faces of dimension one are referred to as {\em edges} or the triangulation. The triangulation is \emph{simplicial} if each equivalence class of faces has at most one member from each pentachoron; i.e. no two faces of a pentachoron are identified in $|\tri|.$ Every simplicial triangulation is also singular, and the second barycentric subdivision of a singular triangulation is always simplicial.
The triangulation is \emph{PL} if, in addition, the boundary of a small neighbourhood of every vertex is PL homeomorphic to the $3$-sphere. The number $2n$ of pentachora of $\tri$ is referred to as the size of~$\tri$.

\subsection{Trisections}

We begin with a formal definition of a trisection of a $4$-manifold.

\begin{definition}[Trisection of closed manifold]
\label{def:multisection}
Let $M$ be a closed, connected, piecewise linear $4$-manifold.
A \emph{trisection} of $M$ is a collection of three piecewise linear codimension zero submanifolds $H_0, H_1, H_2 \subset M$, subject to the following four conditions:
\begin{enumerate}
\setlength\itemsep{0em}
\item  Each $H_i$  is PL homeomorphic to a standard piecewise linear $4$-dimensional $1$-handlebody of genus $g_i$.
\item The handlebodies $H_i$ have pairwise disjoint interior, and $M = \bigcup_i H_i$.
\item The intersection $H_{i} \cap H_{j}$ of any two of the handlebodies is a $3$-dimensional $1$-handlebody.
\item The common intersection $\Sigma = H_{0} \cap H_{1} \cap H_{2}$ of all three handlebodies is a closed, connected surface,  the \emph{central surface}.
\end{enumerate}
\end{definition}
The submanifolds   $H_{ij} = H_{i} \cap H_{j}$  and $\Sigma$ are referred to as the \emph{trisection submanifolds}.
In our illustrations, we use the colours blue, red, and green instead of the labels $0$, $1$, and $2$ and we refer to $H_{\textrm{blue}\thinspace \textrm{red}} = H_{br}$ as the \emph{green} submanifold and so on.

The trisection in the above definition is also termed a $(g; g_0, g_1, g_2)$-trisection, where $g=g(\Sigma)$ is the genus of the central surface if $\Sigma$ is orientable, and the {\em non-orientable genus}, i.e., the number $k$ of crosscaps $\mathbb{R}P^2$ such that $\Sigma = (\mathbb{R}P^2)^{\# k}$, if $\Sigma$ is non-orientable. 

Note that in the original definition given by Gay and Kirby~\cite{GK} they ask for the trisection to be \emph{balanced} in the sense that each handlebody $H_i$ has the same genus.
It is noted in \cite{Meier-classification-2016, Rubinstein-multisections-2016} that any unbalanced trisection can be stabilised to a balanced one.\footnote{A stabilisation of a trisection with central surface of genus g is obtained by attaching a 1-handle to a 4-dimensional handlebody along a properly embedded boundary parallel arc in the 3-dimensional handlebody that is the intersection of the other two 4-dimensional handlebodies of the trisection. The result is a new trisection with central surface of genus g + 1, and exactly one of the 4-dimensional handlebodies has its genus increased by one.}	

\begin{definition}[Trisection genus]
  \label{def:genus}
  The \emph{trisection genus} of $M,$ denoted $g(M),$ is the minimal genus of a central surface in any trisection of $M.$ A trisection with $g(\Sigma) = g(M)$ is called a \emph{minimal genus trisection}.
\end{definition}

Definition~\ref{def:multisection} and hence also Definition~\ref{def:genus} are -- in their original form -- restricted to oriented $4$-manifolds~\cite{GK}. This restriction is not necessary for the purpose of the definition (see \cite{Rubinstein-multisections-2016}), but applying the definition to both orientable and non-orientable manifolds has the following immediate consequence.

\begin{proposition}
  \label{prop:nonor}
  Let $M$ be a closed, connected, piecewise linear $4$-manifold. Then $M$ is orientable if and only if all parts in all trisections of $M$ are orientable, and $M$ is non-orientable if and only if all parts in all trisections of $M$ are non-orientable.
\end{proposition}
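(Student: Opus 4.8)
The plan is to prove a single statement that implies the proposition at once: in any fixed trisection of $M$, the four-dimensional handlebodies $H_0,H_1,H_2$, the three-dimensional handlebodies $H_{01},H_{02},H_{12}$, the central surface $\Sigma$, and $M$ itself are \emph{all} orientable or \emph{all} non-orientable. Since this holds for every trisection, the two biconditionals follow: if $M$ is orientable then every part of every trisection is orientable (and none is non-orientable), and symmetrically in the non-orientable case. The whole argument pivots on $\Sigma$, exploiting that $\Sigma=\partial H_{ij}$ and that $\Sigma$ splits each boundary $\partial H_i$ into $H_{ij}\cup_\Sigma H_{ik}$, as recalled in the introduction.

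For one direction I would use only two elementary facts: a codimension-zero submanifold of an orientable manifold is orientable, and the boundary of an orientable manifold is orientable. Reading these along the inclusions $\Sigma=\partial H_{ij}$, $H_{ij}\subset\partial H_i$ and $H_i\subset M$ shows that orientability propagates \emph{downward}, from $M$ to each $H_i$, each $\partial H_i$, each $H_{ij}$, and finally to $\Sigma$. Taking contrapositives, non-orientability propagates \emph{upward}: if $\Sigma$ is non-orientable then so is each $H_{ij}$ (its boundary), hence each $\partial H_i$ (which contains $H_{ij}$ as a codimension-zero submanifold), hence each $H_i$, and finally $M$. Together these give the implication ``$M$ orientable $\Rightarrow\Sigma$ orientable'' and its contrapositive for free, using nothing beyond the two elementary facts.

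The remaining, more delicate direction is ``$\Sigma$ orientable $\Rightarrow M$ orientable'', together with the upward propagation of orientability to the handlebodies. Here I would use the following feature of $1$-handlebodies in dimensions $3$ and $4$: such a handlebody $H$ is orientable if and only if $\partial H$ is orientable. The nontrivial implication is seen by contraposition, since a non-orientable $1$-handlebody contains a twisted $1$-handle $S^1 \tilde{\times} B^{d-1}$ whose boundary $S^1 \tilde{\times} S^{d-2}$ is non-orientable for $d\ge 3$, forcing $\partial H$ to be non-orientable. Applying this with $d=3$ gives that each $H_{ij}$ is orientable; gluing the two orientable pieces $H_{ij}$ and $H_{ik}$ along the connected surface $\Sigma$ orients $\partial H_i$; and applying the fact with $d=4$ gives that each $H_i$ is orientable.

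The main obstacle is the final assembly $M=H_0\cup H_1\cup H_2$ from three orientable pieces. Choosing an orientation of each $H_i$ induces a boundary orientation on each $H_{ij}\subset\partial H_i$, and $M$ is orientable precisely when these choices can be made so that the two induced orientations on each $H_{ij}$ (one from $H_i$, one from $H_j$) are opposite. A priori this is a $\mathbb{Z}/2$-system over the three pairs whose solvability carries a potential obstruction ``around the triangle'' $\{01,02,12\}$ -- exactly the configuration one might fear could produce a non-orientable $M$ with orientable parts. I expect this to be where the proof really lives: the obstruction vanishes because $\Sigma$ \emph{separates} each $\partial H_i$ into $H_{ij}\cup_\Sigma H_{ik}$, so the orientation of $H_i$ induces \emph{opposite} orientations on $H_{ij}$ and $H_{ik}$ along their common boundary $\Sigma$. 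Recording the induced orientations as signs relative to a fixed orientation of $\Sigma$ and writing down the three gluing conditions, this per-piece constraint makes the three conditions mutually consistent, so a compatible choice of orientations always exists and $M$ is orientable. This completes the common-orientability-type statement and hence the proposition.
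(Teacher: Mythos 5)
Your proposal is correct, and it draws on the same elementary toolkit as the paper's proof: a $1$-handlebody is orientable if and only if its boundary is, and splitting or assembling a closed $3$-manifold along a \emph{connected} surface preserves orientability. The organisation, however, is genuinely different at the one place where it matters. The paper starts from an arbitrary orientable part, propagates orientability around all seven trisection pieces (Heegaard splitting of $\partial H_0$ gives $H_{01}$, $H_{02}$, $\Sigma$; then $\Sigma = \partial H_{12}$ gives $H_{12}$; then gluing gives $\partial H_1$, $\partial H_2$ and hence $H_1$, $H_2$), and then disposes of the passage to $M$ in a single sentence (``an orientation on $H_0$ extends to an orientation on $H_1$ and $H_2$ and $M$ is orientable''). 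You instead pivot on $\Sigma$ -- downward propagation from $M$ to $\Sigma$, upward from $\Sigma$ to the $H_i$ -- and then treat the final assembly $M = H_0 \cup H_1 \cup H_2$ as the real content: you set up the $\mathbb{Z}/2$-system of orientation choices $\epsilon_0, \epsilon_1, \epsilon_2$ and show that the product of the three discrepancies around the triangle $\{H_{01}, H_{02}, H_{12}\}$ equals $+1$, because inside each closed oriented $\partial H_i = H_{ij} \cup_\Sigma H_{ik}$ the two pieces induce \emph{opposite} boundary orientations on $\Sigma$; the three resulting minus signs cancel against the three minus signs coming from the gluing conditions, so a compatible choice exists. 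This is exactly the step the paper asserts without argument, and it is worth spelling out: gluing orientable pieces along parts of their boundaries does not automatically yield an orientable manifold (a cyclic chain of oriented pieces can close up in an orientation-reversing way), so the separation property of $\Sigma$ you invoke is doing real work. What each approach buys: the paper's propagation from \emph{any} single part gives the all-or-nothing statement for the seven pieces in the form most convenient for the ``all parts in all trisections'' phrasing, while your version makes the implication ``all pieces orientable $\Rightarrow M$ orientable'' fully rigorous, closing the tersest step of the published proof; your twisted-handle criterion (non-orientable $1$-handlebody has non-orientable boundary, for $d = 3, 4$) is precisely the fact the paper uses implicitly when it concludes that $H_{12}$ is orientable ``because it is a handlebody''.
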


\begin{proof}
  Assume $M$ has a trisection with $4$-dimensional handlebodies $H_0$, $H_1$ and $H_2$, $3$-dimensional handlebodies $H_{01}$, $H_{02}$ and $H_{12}$, and central surface $\Sigma$.

  Assume that one of the $4$-dimensional handlebodies, say $H_0$, is orientable. Then $\partial H_0 = \partial (H_1 \cup H_2)$ is orientable. Since decomposing an orientable $3$-manifold into two $3$-dimensional handlebodies along their common and connected boundary must result in two orientable handlebodies, it follows that $H_{01}$, $H_{02}$ and $\Sigma$ must be orientable. But $\Sigma = \partial H_{12}$ is orientable and thus $H_{12}$ as well because it is a handlebody. Finally, a decomposition of a $3$-manifold into two orientable handlebodies along their connected boundaries must be an orientable manifold and it follows that $\partial H_1$ and $\partial H_2$ must be orientable and hence $H_1$ and $H_2$ as well. 

  If one of the $3$-dimensional handlebodies is orientable, then $\Sigma$ is orientable and hence all other three $3$-dimensional handlebodies are orientable. If follows that they pairwise constitute a Heegaard splitting of the orientable boundaries of the three $4$-dimensional handlebodies and thus all seven parts of the trisection must be orientable. The same argument holds if $\Sigma$ is assumed to be orientable.

  In particular, either all seven parts of a trisection are orientable or all seven parts are non-orientable. We conclude the proof by noting that, on the one hand, an orientation on $H_0$ extends to an orientation on $H_1$ and $H_2$ and $M$ is orientable and, on the other hand, $H_0$ being non-orientable directly implies that $M$ must be non-orientable.
\end{proof}

It follows that, in a $(g; g_0, g_1, g_2)$-trisection of a non-orientable $4$-manifold $M$, we must have $g_i \geq 1$, $1 \leq i \leq 3$, and $g$ even.

\subsection{Three elementary facts}

\begin{lemma}
\label{lem:000}
If $M$ has a trisection in which all $4$-dimensional handlebodies are $4$-balls, then the trisection is a minimal genus trisection.
\end{lemma}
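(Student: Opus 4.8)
The plan is to reduce everything to a single linear relation between the genus of the central surface and the Euler characteristic of $M$, and then to observe that the hypothesis makes the defining genus as small as that relation permits. First I would record the trivial but essential observation that a $4$-dimensional $1$-handlebody of genus $0$ is exactly a $4$-ball, so the hypothesis is equivalent to $g_0 = g_1 = g_2 = 0$. Since a $4$-ball is orientable, Proposition~\ref{prop:nonor} forces $M$ — and hence the central surface of \emph{every} trisection of $M$ — to be orientable, so throughout I may work with the orientable genus and use $\chi(\Sigma) = 2 - 2g$.

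The key step is to compute $\chi(M)$ directly from a trisection by inclusion--exclusion over the decomposition $M = H_0 \cup H_1 \cup H_2$, whose pairwise intersections are the $H_{ij}$ and whose triple intersection is $\Sigma$:
\[
\chi(M) = \sum_{i} \chi(H_i) - \sum_{i<j} \chi(H_{ij}) + \chi(\Sigma).
\]
Here $\chi(H_i) = 1 - g_i$, since $H_i$ is homotopy equivalent to a wedge of $g_i$ circles. For the pairwise terms I would use the trisection structure that $\Sigma$ is a Heegaard surface splitting each boundary as $\partial H_i = H_{ij} \cup_\Sigma H_{ik}$, so that $\partial H_{ij} = \Sigma$ and each $H_{ij}$ is a $3$-dimensional handlebody of genus $g$ with $\chi(H_{ij}) = 1 - g$. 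Substituting these together with $\chi(\Sigma) = 2 - 2g$ and simplifying yields the identity
\[
g = \chi(M) - 2 + (g_0 + g_1 + g_2).
\]

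With this identity the conclusion is immediate. The quantity $\chi(M)$ is a fixed topological invariant of $M$, and by Proposition~\ref{prop:nonor} the same orientable-genus identity holds verbatim for every trisection of $M$. Since each $g_i \geq 0$, the genus of any trisection satisfies $g \geq \chi(M) - 2$, with equality precisely when $g_0 + g_1 + g_2 = 0$, that is, when all three $4$-dimensional handlebodies are $4$-balls. A trisection meeting the hypothesis therefore realises this minimum and is thus a minimal genus trisection.

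The only real work sits in the middle step. The point requiring care is the identification of each pairwise intersection $H_{ij}$ as a handlebody of the \emph{same} genus as $\Sigma$ — equivalently, that $\Sigma$ is the entire boundary of $H_{ij}$, which comes from the Heegaard splitting $\partial H_i = H_{ij} \cup_\Sigma H_{ik}$ — together with the justification that additivity of the Euler characteristic applies to this PL decomposition into codimension-zero pieces meeting along subcomplexes. Neither point is difficult, but both carry the content of the argument; once they are settled, minimality follows purely from the nonnegativity of the $g_i$.
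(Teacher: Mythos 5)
Your proof is correct, but it takes a genuinely different route from the paper. The paper invokes the Gay--Kirby uniqueness theorem: it takes the given $(g';0,0,0)$-trisection and a minimal genus $(g;g_0,g_1,g_2)$-trisection, passes to a common stabilisation, and compares the two stabilisation counts to force $g=g'$ and $g_0=g_1=g_2=0$. That argument needs $M$ to be orientable and the trisections to be trisections in the sense of Gay--Kirby, which is why the paper first observes that the hypothesis forces $\pi_1(M)=1$. Your argument instead runs entirely through the Euler characteristic identity $\chi(M) = 2 + g - (g_0+g_1+g_2)$, which is exactly the computation the paper uses in the proof of Lemma~\ref{lem:g}; combined with Proposition~\ref{prop:nonor} (to guarantee every trisection of $M$ has orientable parts, so $\chi(\Sigma)=2-2g$ applies uniformly), it gives $g \ge \chi(M)-2$ for every trisection, with equality exactly when all three $4$-dimensional handlebodies are balls. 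This buys you something real: your proof stays elementary and purely PL, avoiding the deep stable-equivalence theorem, and it additionally identifies the trisection genus as $\chi(M)-2$ and shows that \emph{any} minimal genus trisection of such an $M$ must have all handlebodies equal to $4$-balls --- conclusions the paper's proof also yields, but only after appealing to \cite{GK}. The two points you flag as carrying the content (that $\partial H_{ij}=\Sigma$, so each $H_{ij}$ is a genus-$g$ handlebody, and that inclusion--exclusion for $\chi$ applies to codimension-zero pieces meeting along subcomplexes) are indeed the right ones, and both hold for trisections as defined in the paper.
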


\begin{proof}
Suppose $M$ has a $(g'; 0, 0, 0)$-trisection. In particular, the fundamental group of $M$ is trivial, $M$ is orientable, and the trisection is a trisection in the sense of \cite{GK}. Moreover, suppose that $M$ has also a $(g; g_0, g_1, g_2)$-trisection and $g = g(M).$ By \cite{GK}, these have a common stabilisation. Suppose this is a $(g''; k_0, k_1, k_2)$-trisection. Each elementary stabilisation increases the genus of one handlebody and the genus of the central surface by one. Hence $g'' = g' + k_0+ k_1+ k_2$ and $g'' = g + (k_0 - g_0) + (k_1-g_1) + (k_2-g_2).$ This gives $g' \ge g (M) = g = g' + g_0+ g_1+ g_2.$ This forces $g_0 = g_1 = g_2=0$ and $g=g'.$
\end{proof}

\begin{lemma}
\label{lem:g}
Suppose $M$ has a $(g; g_0, g_1, g_2)$-trisection. Then, $g \ge \beta_1 (M, \mathbb{Z}_2) + \beta_2 (M, \mathbb{Z}_2)$ if $M$ is orientable, and $g \ge 2 (\beta_1 (M, \mathbb{Z}_2) + \beta_2 (M, \mathbb{Z}_2))$ if $M$ is non-orientable (and $g$ denotes the non-orientable genus).
\end{lemma}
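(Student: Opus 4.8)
The plan is to work throughout with $\mathbb{Z}_2$ coefficients, which has two decisive advantages: every piece of the trisection becomes a $\mathbb{Z}_2$-homology wedge of circles or a closed surface regardless of orientability, and $\mathbb{Z}_2$-Poincar\'e duality applies to $M$ whether or not it is orientable. Write $\beta_i = \beta_i(M,\mathbb{Z}_2)$. Since each $H_{ij}$ is a $3$-dimensional $1$-handlebody with $\partial H_{ij} = \Sigma$, the relation $\chi(\Sigma) = 2\chi(H_{ij})$ shows that $r := \beta_1(H_{ij},\mathbb{Z}_2) = \tfrac12\beta_1(\Sigma,\mathbb{Z}_2)$ is independent of the pair $ij$, and that $r = g$ when $\Sigma$ is orientable while $r = g/2$ when $\Sigma$ is non-orientable (recall that $g$ then denotes the non-orientable genus). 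Both inequalities in the lemma are therefore equivalent to the single orientation-independent bound $\beta_1 + \beta_2 \le r$, which is what I would prove; the factor $2$ in the statement is nothing but this genus-versus-$\beta_1(\Sigma)$ discrepancy.

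To compute the homology of $M$ I would feed the cover $M = H_0 \cup H_1 \cup H_2$, with double overlaps $H_{ij}$ and triple overlap $\Sigma$, into the generalised Mayer--Vietoris spectral sequence. Its $E_1$-page is assembled from the easily listed $\mathbb{Z}_2$-homologies of the seven pieces: $\beta_1(H_i) = g_i$, $\beta_1(H_{ij}) = r$, and $\Sigma$ contributing $(1,2r,1)$. Because the nerve of the cover is a filled $2$-simplex, hence contractible, all higher differentials vanish and $E_2 = E_\infty$. Reading off the diagonals expresses $\beta_1$, $\beta_2$ and $\beta_3$ in terms of the two maps $H_1(\Sigma) \to \bigoplus_{ij} H_1(H_{ij})$ and $\bigoplus_{ij} H_1(H_{ij}) \to \bigoplus_i H_1(H_i)$, equivalently in terms of the genera $g_i$ and the subspaces $L_{ij} = \ker\bigl(H_1(\Sigma,\mathbb{Z}_2) \to H_1(H_{ij},\mathbb{Z}_2)\bigr)$, each of dimension $r$ inside the $2r$-dimensional space $H_1(\Sigma,\mathbb{Z}_2)$. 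In particular $\beta_3 = \dim(L_{01} \cap L_{02} \cap L_{12}) =: t$.

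The crucial geometric input --- and the one place where the trisection structure is really used --- is a bound on this triple intersection. For each $i$, the two handlebodies $H_{ij}$ and $H_{ik}$ form a Heegaard splitting of $\partial H_i$, so a sub-Mayer--Vietoris sequence identifies $\dim(L_{ij} \cap L_{ik})$ with $\beta_2(\partial H_i,\mathbb{Z}_2) = g_i$ (here $\partial H_i$ is a connected sum of $g_i$ copies of an $S^2$-bundle over $S^1$). Hence $t \le \dim(L_{ij}\cap L_{ik}) = g_i$ for every $i$, giving $3t \le g_0 + g_1 + g_2$. I would then invoke $\mathbb{Z}_2$-Poincar\'e duality $\beta_1 = \beta_3$ to eliminate the auxiliary rank that otherwise appears in $\beta_1$; the expressions collapse to $\beta_1 + \beta_2 = r + 3t - (g_0 + g_1 + g_2)$, and $3t \le g_0+g_1+g_2$ yields $\beta_1 + \beta_2 \le r$, as required.

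I expect the main obstacle to be the triple-intersection estimate $3t \le g_0+g_1+g_2$: the spectral-sequence bookkeeping is routine once set up, and $\mathbb{Z}_2$-Poincar\'e duality is automatic, but the identification $\dim(L_{ij}\cap L_{ik}) = g_i$ is the single step that genuinely exploits that the $H_{ij}$ arise as Heegaard handlebodies of the boundaries of the four-dimensional pieces. A secondary point requiring care is the uniform treatment of the non-orientable case: one must check that the homotopy types of the pieces, the equality $\chi(\Sigma) = 2\chi(H_{ij})$, and the computation $\beta_2(\partial H_i,\mathbb{Z}_2) = g_i$ all persist once twisted handles ($S^1 \tilde{\times} B^3$ and $S^1\tilde{\times} S^2$) and a non-orientable central surface are allowed --- which is precisely why working with $\mathbb{Z}_2$ coefficients throughout is essential.
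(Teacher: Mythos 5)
Your proof is correct, but it takes a genuinely different route from the paper's. The paper's argument is short: it combines the Euler characteristic count $\chi(M) = 2 + (1-\chi(\Sigma)/2) - g_0 - g_1 - g_2$ with the inequality $\beta_1(M,\mathbb{F}) \le g_i$, obtained from the group-theoretic fact that $\pi_1(H_i)$ surjects onto $\pi_1(M)$, and then applies $\mathbb{Z}_2$-Poincar\'e duality $\beta_1 = \beta_3$. Your argument never touches $\pi_1$: your key inequality is $\beta_3(M,\mathbb{Z}_2) = \dim(L_{01}\cap L_{02}\cap L_{12}) \le \dim(L_{ij}\cap L_{ik}) = g_i$, derived from the Mayer--Vietoris identification $H_2(\partial H_i,\mathbb{Z}_2)\cong L_{ij}\cap L_{ik}$ for the Heegaard splitting $\partial H_i = H_{ij}\cup_\Sigma H_{ik}$, which is correct (each summand $S^1\times S^2$ or $S^1\tilde{\times}S^2$ of $\partial H_i$ contributes one to $\beta_2$ over $\mathbb{Z}_2$). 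Since $\mathbb{Z}_2$-Poincar\'e duality gives $\beta_1 = \beta_3 = t$, your key inequality and the paper's are equivalent statements, and your identity $\beta_1 + \beta_2 = r + 3t - (g_0+g_1+g_2)$ is precisely the paper's Euler characteristic identity rewritten using $\beta_1 = \beta_3 = t$; I verified both the identity and the spectral-sequence bookkeeping behind it. What your route buys: it is purely homological, treats the non-orientable case with no extra effort, and it in effect re-derives over $\mathbb{Z}_2$ the trisection-homology formulas of \cite{Feller-calculating-2017} --- exactly the alternative derivation that the paper mentions in the remark after the lemma but does not carry out. What the paper's route buys: brevity, and the bound $\beta_1(M,\mathbb{F})\le g_i$ over every field, not only $\mathbb{Z}_2$. (Both proofs also implicitly invoke Proposition~\ref{prop:nonor} to match the orientability of $\Sigma$ with that of $M$ when converting $r$ into the genus $g$ of the statement.)

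One step you should tighten: the claim that $E_2 = E_\infty$ ``because the nerve is contractible'' is not a valid implication on its own --- for a three-set cover the differential $d^2\colon E^2_{2,q}\to E^2_{0,q+1}$ is not excluded by the shape of the page. It does vanish here, for concrete reasons: since all seven pieces are connected, the $q=0$ row of $E^2$ computes the homology of the nerve $\Delta^2$, which vanishes in positive degrees (this is where contractibility genuinely enters), so $E^2_{2,0}=0$ and the differential $d^2\colon E^2_{2,0}\to E^2_{0,1}$ dies; the remaining candidates $d^2\colon E^2_{2,1}\to E^2_{0,2}$ and $d^2\colon E^2_{2,2}\to E^2_{0,3}$ have zero targets because the $H_i$ are $1$-handlebodies with $H_q(H_i,\mathbb{Z}_2)=0$ for $q\ge 2$. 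This fix is routine, but it is needed: nerve contractibility alone does not kill higher differentials.
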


\begin{proof}
Let $H_0$, $H_1$ and $H_2$ denote the $4$-dimensional handlebodies, and let $\Sigma$ be the central surface of the trisection. A simple calculation shows that $\chi(M) = 2 + (1-\chi(\Sigma)/2) - g_0 -g_1 - g_2$. On the other hand, since the fundamental group of $H_i$, $0 \leq i\leq 2$, surjects onto the fundamental group of $M$, we have that $\beta_1 (M, \mathbb{F}) \leq g_i$ for all $0 \leq i \leq 2 $ and for all fields of coefficients. Moreover, since all manifolds are orientable with respect to the field with two elements $\mathbb{Z}_2$, it follows from Poincar\'e duality that $\beta_1 (M, \mathbb{Z}_2) = \beta_3 (M, \mathbb{Z}_2)$.  

Combining all of the identities and inequalities above finishes the proof.
\end{proof}

The orientable case of Lemma~\ref{lem:g} is contained in the introduction of \cite{Chu18TrisectionGenus} due to Chu and the second author. Note that Lemma~\ref{lem:g} can also be followed from Equation (3.9) in \cite{Feller-calculating-2017}, where it is shown how to compute the homology of a $4$-manifold from a trisection diagram.

\begin{lemma}
  \label{lem:subadditive}
  If $M_0$ has a trisection with central surface of Euler characteristic $\chi_0$ and $M_1$ has a trisection with central surface of Euler characteristic $\chi_1,$ then the connected sum of $M_0$ and $M_1$ has a trisection with central surface of Euler characteristic $\chi_0 + \chi_1 - 2.$ In particular, $g(M_0 \# M_1) \leq g(M_0) + g(M_1)$ if $M_0$ and $M_1$ are either both orientable or both non-orientable and $g(M_0 \# M_1) \leq 2 g(M_0) + g(M_1)$ if $M_0$ is orientable and $M_1$ is non-orientable.
\end{lemma}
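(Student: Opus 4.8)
The plan is to build a trisection of $M_0 \# M_1$ directly from the given trisections by performing the connected sum inside a neighbourhood of a point of each central surface where the decomposition is standard, and then to read off the Euler characteristic and genus of the resulting central surface.

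First I would fix the local model of a trisection near an interior point $p$ of a central surface $\Sigma$. Identifying a neighbourhood of $p$ in the ambient $4$-manifold with $\mathbb{C}^2$ so that $\Sigma$ corresponds to $\{z=0\}$, with the $z$-coordinate spanning the normal plane, the three $4$-dimensional handlebodies are the three sectors $\{\arg z \in [\theta_i, \theta_{i+1}]\}$, the three $3$-dimensional handlebodies are the separating half-planes, and $\Sigma$ is the common axis. A small PL $4$-ball $B_i \subset M_i$ around such a point then meets the trisection of $M_i$ in the standard genus-$0$ trisection of $B^4$, whose boundary $3$-sphere carries the standard genus-$0$ trisection of $S^3$: three $3$-balls meeting along three discs with common boundary circle $\Sigma_i \cap \partial B_i$.

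Next I would form $M_0 \# M_1$ by deleting $\operatorname{int} B_0$ and $\operatorname{int} B_1$ and gluing $M_0 \setminus \operatorname{int} B_0$ to $M_1 \setminus \operatorname{int} B_1$ along their $S^3$ boundaries by a homeomorphism carrying the induced genus-$0$ trisection of one boundary sphere to that of the other (orientation-reversing when both manifolds are oriented). By construction the three $4$-dimensional pieces of the glued object are the boundary connected sums $H_i^{(0)} \natural H_i^{(1)}$, the three $3$-dimensional pieces are boundary connected sums of $3$-dimensional handlebodies, and the new central surface is $\Sigma_0 \# \Sigma_1$. I would then verify the four conditions of Definition~\ref{def:multisection}: a boundary connected sum of standard $1$-handlebodies of genera $a$ and $b$ is again a standard $1$-handlebody, of genus $a+b$ (and likewise in dimension $3$), while $\Sigma_0 \# \Sigma_1$ is closed and connected. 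This yields a genuine trisection of $M_0 \# M_1$ with central surface $\Sigma_0 \# \Sigma_1$, and the computation
\[ \chi(\Sigma_0 \# \Sigma_1) = (\chi_0 - 1) + (\chi_1 - 1) = \chi_0 + \chi_1 - 2 \]
gives the first claim.

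Finally I would translate the Euler characteristic statement into the genus inequalities by choosing minimal genus trisections of $M_0$ and $M_1$ and recording how surface genus behaves under connected sum. If $\Sigma_0$ and $\Sigma_1$ are both orientable (respectively both non-orientable), then the orientable genus (respectively the crosscap number) is additive, which yields $g(M_0 \# M_1) \le g(M_0) + g(M_1)$. In the mixed case, where $\Sigma_0$ is orientable of genus $g(M_0)$ and $\Sigma_1$ is non-orientable with $g(M_1)$ crosscaps, the connected sum is non-orientable, and comparing Euler characteristics $2 - c = (2 - 2g(M_0)) + (2 - g(M_1)) - 2$ shows that its crosscap number is $c = 2 g(M_0) + g(M_1)$, giving the second inequality. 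The main obstacle is the verification in the third step that the abstractly defined gluing genuinely satisfies all parts of Definition~\ref{def:multisection}; in particular, one must arrange the construction to be PL-standard near the central surfaces so that the boundary connected sums of handlebodies and the connected sum of surfaces are formed cleanly. Once that is in place, the genus bookkeeping, including the mixed orientable/non-orientable case, is only an Euler characteristic count.
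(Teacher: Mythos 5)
Your proposal is correct and follows essentially the same route as the paper: the paper performs the connected sum using standardly trisected $4$-balls (citing \cite[\S2]{GK}, with local orientations in the non-orientable cases) and then does exactly the Euler characteristic bookkeeping you carry out, including the adjustment $\chi = 2-g$ versus $\chi = 2-2g$ that produces the factor of $2$ in the mixed case. The only difference is that you make the local model and the boundary-connected-sum verification explicit where the paper delegates them to Gay--Kirby, which is a fair expansion rather than a different argument.
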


\begin{proof}
  If both manifolds $M_0$ and $M_1$ are orientable and oriented, then there is a well-defined connected sum as explained in \cite[\S2]{GK}. This follows by choosing, for the connect sum operation, two standardly trisected $4$-balls in both $4$-manifolds. The proof is finished by substituting $\chi = 2 - 2g$ for the (orientable) central surfaces of the respective trisections.

  If both $M_0$ and $M_1$ are non-orientable, then we can use local orientations to define the connected sum, and the result follows from the formula $\chi = 2 - g$ for the genus of non-orientable surfaces.

  If $M_0$ is orientable and $M_1$ is non-orientable, then we again use local orientations to perform the connected sum. Since $M_0 \# M_1$ is non-orientable in this case, its trisection genus now relates to the non-orientable genus of the central surface in a minimal genus trisection and thus the formula for the genus must be adjusted accordingly.
\end{proof}

\subsection{Algorithms to compute trisections}

Our set-up for algorithms to compute trisections follows \cite{Bell-computing-2017}, where a trisection on $M$ is induced by \emph{tricolourings} of the triangulation. This is now summarised; the reader is referred to \cite{Bell-computing-2017} for a more detailed discussion.

\begin{figure*}
\centering
\includegraphics[width=.8\linewidth]{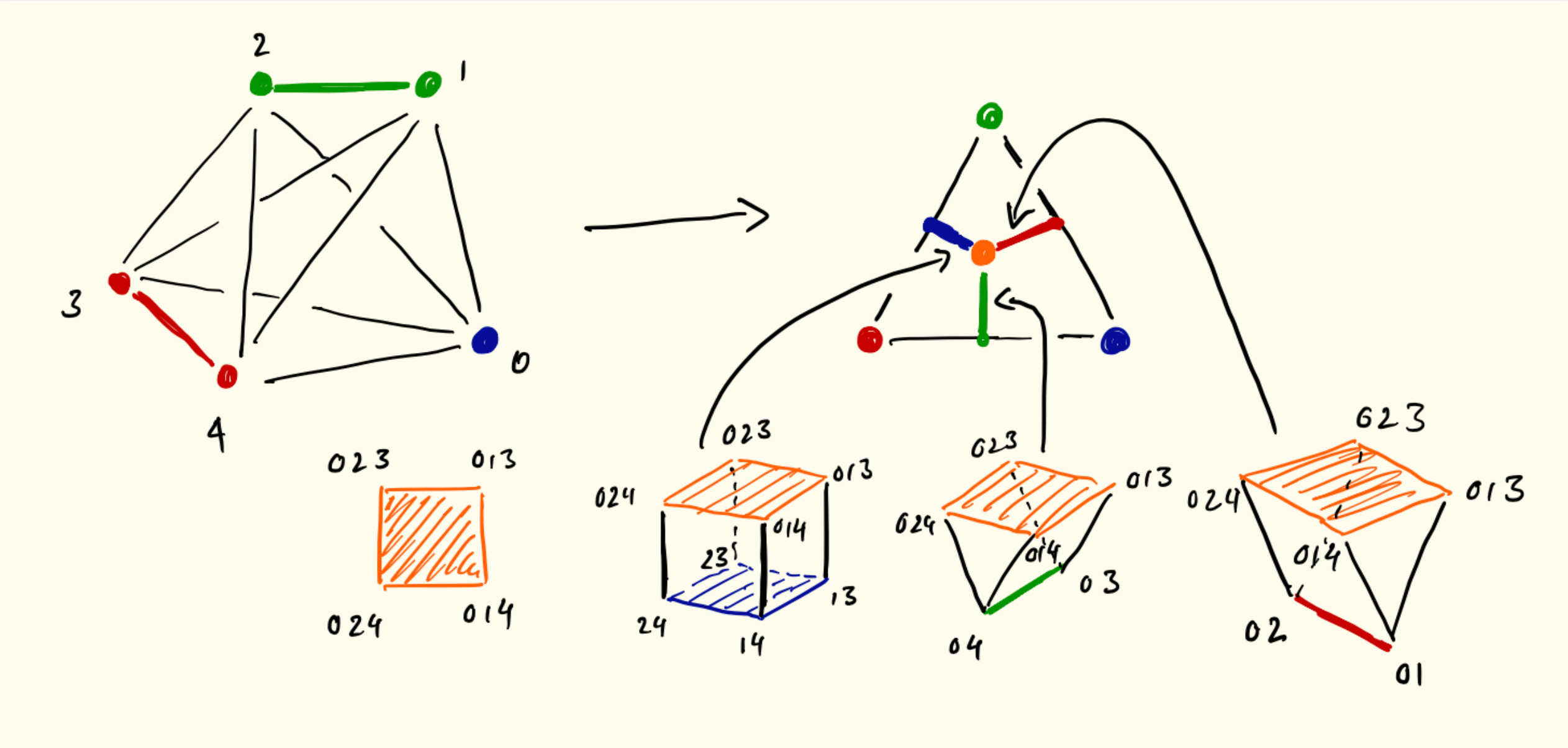}
\caption{Pieces of the trisection submanifolds. The vertices of the pieces are barycentres of faces and labelled with the corresponding vertex labels. The central surface meets the pentachoron in a square.
Two of the $3$-dimensional trisection submanifolds meet the pentachoron in triangular prisms and the third (corresponding to the singleton) meets it in a cube. Moreover, any two of these meet in the square of the central surface.}
\label{fig:pieces}
\end{figure*}

Let $M$ be a closed, connected $4$-manifold with (not necessarily simplicial, but possibly singular) triangulation $\tri$.
A partition $\{P_0, P_1, P_2\}$ of the set of all vertices of $\tri$ is a \emph{tricolouring} if every pentachoron meets two of the partition sets in two vertices and the remaining partition set in a single vertex. In this case, we also say that the triangulation is \emph{tricoloured}. Moreover, we call a triangulation {\em tricolourable} if it admits at least one tricolouring.

Denote the vertices of the standard $2$-simplex $\Delta^2$ by $v_0$, $v_1$, and $v_2$.
A tricolouring determines a natural map $\mu \co M \to \Delta^2$ by sending the vertices in $P_k$ to $v_k$ and extending this map linearly over each simplex.
Note that the pre-image of $v_k$ is a graph $\Gamma_k$ in the $1$-skeleton of $M$ spanned by the vertices in~$P_k$.

The strategy in \cite{Bell-computing-2017,Rubinstein-multisections-2016} is to use $\mu$ to pull back the \emph{cubical structure} of the simplex to a trisection of $M$.
The \emph{dual spine} $\Pi^n$ in an $n$-simplex $\Delta^n$ is the $(n-1)$-dimensional subcomplex of the first barycentric subdivision of $\Delta^n$ spanned by those vertices of the first barycentric subdivision which are not vertices of $\Delta^n$ itself.
This is shown for $n=2$ and $n=3$ in Figure~\ref{fig:Pi3}.
Decomposing along $\Pi^n$ gives $\Delta^n$ a natural
 \emph{cubical structure} with $n+1$ cubes of dimension $n$, and the lower-dimensional cubes that we focus on are the intersections of non-empty collections of these top-dimensional cubes.

\begin{figure}[h]
    \centering
    \includegraphics[height=3.4cm]{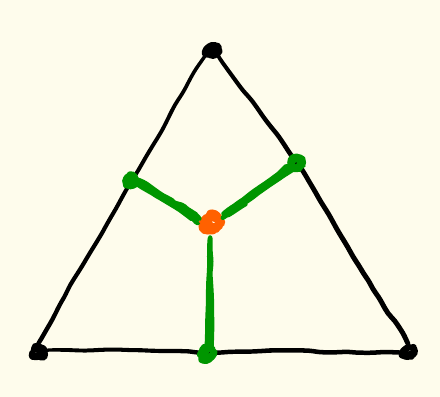}\hspace{2.5cm}
    \includegraphics[height=3.4cm]{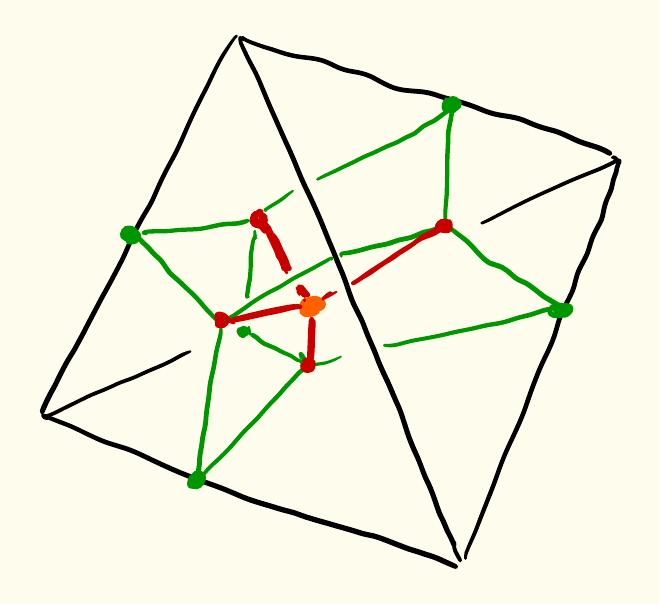}
\caption{Dual cubes. Left: $\Pi^2\subset \Delta^2$. Right: $\Pi^3\subset \Delta^3$.\label{fig:Pi3}}
\end{figure}

Recall that a compact subpolyhedron $P$ in the interior of a manifold $M$ is called a \emph{(PL) spine of $M$} if $M$ collapses onto $P.$ If $P$ is a spine of $M$, then $M \setminus P$ is PL homeomorphic with $\partial M \times [0,1).$

The pre-images under $\mu$ of the dual cubes of $\Pi^2 \subset \Delta^2$ have very simple combinatorics.
The barycentre of $\Delta^2$ pulls back to exactly one $2$-cube in each pentachoron of $M$, and these glue together to form a surface $\Sigma$ in $M$.
This surface is the common boundary of each of the three $3$-manifolds obtained as pre-images of an interior $1$-cube (edge) of $\Delta^2$.
Each such $3$-manifold is made up of cubes and triangular prisms, as in Figure~\ref{fig:pieces}.
Each interior $1$-cube $c$ has boundary the union of the barycentre of $\Delta^2$ and the barycentre $b$ of an edge of $\Delta^2$.
Since the map $\mu \co M \to \Delta^2$ is linear on each simplex, the pre-image $\mu^{-1}(c)$ collapses to the pre-image $\mu^{-1}(b)$.
In particular, each $3$-manifold has a spine made up of $1$-cubes and $2$-cubes.

It is shown in \cite{Bell-computing-2017} that the above construction gives a trisection if:
\begin{enumerate}
\setlength\itemsep{0em}
\item the graph $\Gamma_k$ is connected for each $k$; and
\item the pre-image of an interior $1$-cube of $\Delta^2$ has a $1$-dimensional spine.
\end{enumerate}

A tricolouring is a \emph{c-tricolouring} if $\Gamma_k$ is connected for each $k.$ A c-tricolouring is a \emph{ts-tricolouring} if the pre-image of each interior $1$-cube collapses onto a $1$-dimensional spine. In this case, the dual cubical structure of $\Delta^2$ pulls back to a trisection of~$M$.

\begin{definition}[Trisection supported by triangulation]
We say that a trisection of $M$ is {\em supported} by the triangulation $\tri$ of $M$, if $\tri$ is ts-tricolourable and the trisection is isotopic to the pull-back of the dual cubical structure of $\Delta^2.$ In this case, the trisection is said to be \emph{dual} to the corresponding ts-tricolouring.
\end{definition}

An algorithm to construct a ts-tricolouring from an arbitrary initial triangulation is given in \cite{Bell-computing-2017}. It uses subdivisions and bistellar moves in order to justify that the resulting triangulation has a ts-tricolouring. As a result, the number of pentachora of the final triangulation is larger by a factor of 120 compared to the number of pentachora of the initial one (see \cite[Theorem 4]{Bell-computing-2017}.)

The new algorithmic contribution of this paper is the development and implementation of an algorithm to determine whether a given triangulation directly admits a ts-tricolouring. This is given in \S\ref{sec:implementation}.

\subsection{Crystallisations of simply connected $4$-manifolds}
\label{ssec:simple}

Let $G = (V, E)$ be a multigraph without loops. An {\em edge colouring} of
$G$ is a surjective map $\gamma \co E \to C = \{0, 1, \ldots , d \}$
such that $\gamma(e) \neq \gamma(f)$ whenever $e$ and $f$ are adjacent.
The tuple $(G, \gamma)$ is said to be a {\em $(d+1)$-coloured graph}.
For the remainder of this section we fix $d$ to be~$4$.

For a $5$-coloured graph $(G,\gamma)$, and $B \subseteq C$,
$|B| = k$, the graph $G_B = (V(G), \gamma^{-1}(B))$ together with the colouring $\gamma$ restricted to $\gamma^{-1}(B)$ is a $k$-coloured graph. If for all $j \in C$, $G_{C\setminus\{j\}}$
is connected, then $(G,\gamma)$ is called {\em contracted}.

A $5$-coloured graph $(G, \gamma)$ defines a $4$-dimensional simplicial cell complex $K(G)$:
For each $v\in V(G)$ take a pentachoron $\Delta_v$ and label
its vertices by $0, 1,2,3, 4$. If $u, v \in V(G)$ are joined by an edge $e$ in $G$ and $\gamma(e) =  j$,
we identify $\Delta_u$ and $\Delta_v$ along the tetrahedra opposite to vertex $j$ with
equally labelled vertices glued together. This way, no face of $K(G)$ can have self-identifications and
$K(G)$ is a regular simplicial cell complex. We say that $(G, \gamma)$
{\em represents} the simplicial cell complex $K(G)$.
Since, in addition, the number of $j$-labelled vertices of $K(G)$ is equal to the number of components of
$G_{C\setminus\{j\}}$ for each $ j\in C$, the simplicial cell complex $K(G)$ contains exactly $5$ vertices
if and only if $G$ is contracted~\cite{Ferri86GraphTheoryCrystallizations}.

For a $4$-manifold $M$ we call a $5$-coloured contracted graph $(G, \gamma)$ a {\em crystallisation} of
$M$ if the simplicial cell complex $K(G)$ is a singular triangulation of $M$ (with no self-identifications of any of its faces).
Every PL $4$-manifold admits such a crystallisation due to work by Pezzana~\cite{Pezzana74Crystallizations}.

We refer to $(G, \gamma)$ as simple, if $K(G)$ has exactly $10$ edges. That is, if the one-skeleton of $K(G)$ is equal
to the $1$-skeleton of any of its pentachora. While not every simply connected PL $4$-manifold can admit a simple crystallisation\footnote{Note that, on the one hand there exist simply connected topological $4$-manifolds with an infinite number of PL structures, and on the other hand, every topological type of simply connected PL $4$-manifold can only admit a finite number of simple crystallisations.},
this is true for the standard ones, see the article by Basak and the first author \cite{BaSCryst} for an explicit construction.


\section{Implementation and data}
\label{sec:implementation}

In this section we describe the algorithm to check whether a singular triangulation admits a trisection and present some experimental data for the Budney-Burton census of orientable and non-orientable singular triangulations of $4$-manifolds up to six pentachora \cite{4d-census}.

\subsection{Implementation}
\label{ssec:algo}

Given a singular triangulation $\tri$ with vertex set $V=V(\tri)$, perform the following three basic and preliminary checks.

\begin{enumerate}
  \item Check if $\tri$ has at least three vertices.
  \item For all triangles $t \in \tri$, check if $t$ contains at least two vertices.
  \item For all partitions of the vertex set into three non-empty parts $ P_0 \sqcup P_1 \sqcup P_2 = V$, for all triangles $t \in \tri$, check that no triangle is monochromatic with respect to $(P_0,P_1,P_2)$ (i.e., every triangle contains vertices from at least two of the $P_i$, $i=0,1,2$).
\end{enumerate}

If any of these checks fail we conclude that $\tri$ does not admit a valid tricolouring (i.e., $\tri$ is not tricolourable). Otherwise, for each tricolouring of $\tri$ we proceed in the following way.

Compute the spines of the $4$-dimensional handlebodies, that is, the graphs $\Gamma_i$ in the $1$-skeleton of $\tri$ spanned by the vertices in $P_i$, $i=0,1,2$. If all of the $\Gamma_i$, $i=0,1,2$, are connected, $(P_0,P_1,P_2)$ defines a c-tricolouring of $\tri$. Otherwise we conclude that the partition $(P_0,P_1,P_2)$ is not a c-tricolouring of~$\tri$.

A spine of the $3$-dimensional trisection submanifolds is given by the $1$- and $2$-cubes sitting in bi-coloured triangles and tetrahedra. Denote these three complexes $\gamma_i$, $i=0,1,2$ (with $\gamma_i$ being disjoint of all faces containing vertices in $P_i$). We compute the Hasse diagram of each of the $\gamma_i$ and check their connectedness. In case all three complexes are connected, we perform a greedy-type collapse (as defined in \cite{Benedetti13RandomDMT}) to check whether $\gamma_i$ collapses onto a $1$-dimensional complex (and thus the $3$-dimensional trisection submanifolds are handlebodies). Note that this is a deterministic procedure, see, for instance, \cite{Tancer12CollNPComplete}.

If all of these checks are successful, $(P_0,P_1,P_2)$ defines a ts-tricolouring and thus a trisection of $M$ dual to this ts-tricolouring of $\tri$. Otherwise we conclude that $(P_0,P_1,P_2)$ does not define a ts-tricolouring.

Finally we compute the central surface, its genus, the genera of the handlebodies $\Gamma_i$ and $\gamma_i$, $i=0,1,2$, and, as a plausibility check, compare the genus of the central surface to the genera of the $3$-dimensional handlebodies.

\smallskip

For $\tri$ a $v$-vertex, $n$-pentachora triangulation the running time of this procedure is roughly the partition number $p(v)$ times a small polynomial in $n$. Since the triangulations we need to consider usually come with a very small number of vertices, the running time of our algorithm is sufficiently feasible for our purposes.

\subsection{Experimental data}
\label{ssec:expts}

\paragraph{The orientable census}

The census of singular triangulations of orientable, closed $4$-manifolds \cite{4d-census} contains $8$ triangulations with two pentachora, $784$ triangulations with four pentachora and $440\,495$ triangulations with $6$ pentachora.

In the $2$-pentachora case, exactly $6$ of the $8$ triangulations have at least $3$ vertices, and in exactly $3$ of them all triangles contain at least two vertices. Overall, these three triangulations admit $19$ tricolourings, $18$ of which are c-tricolourings covering $2$ triangulations. Of these $18$ c-tricolourings all are dual to trisections. Overall, exactly $2$, that is, $25\%$ of all $2$-pentachora triangulations admit a trisection.

Of the $784$ triangulations with $4$-pentachora, exactly $324$ have at least three vertices, and exactly $24$ only have triangles containing at least two vertices. Each of these $24$ triangulations admits at least one tricolouring. Altogether they admit a total of $88$ tricolourings. In $15$ triangulations, at least one tricolouring defines connected graphs $\Gamma_i$, $0,1,2$, and is thus a c-tricolouring. These $15$ triangulations admit a total of $72$ c-tricolourings. All $72$ of them are dual to trisections giving rise to a total of $15$ triangulations ($1.9\%$ of the census) in the $4$-pentachora census admitting a trisection.

There are $440\,495$ triangulations in the $6$-pentachoron census, exactly $116\,336$ of which have at least three vertices, and exactly $837$ of which have no triangles with only one vertex. $824$ of these $837$ triangulations admit at least one tricolouring, summing up to a total of $1\,689$ tricolourings in the census. Amongst these, $450$ triangulations have at least one $c$-tricolouring, summing up to a total of $1\,100$ c-tricolourings. In $8$ of these $1\,100$ c-tricolourings, the $2$-complexes $\gamma_i$, $i=0,1,2$, are not all connected, and in a further $5$ of them not all $\gamma_i$, $i=0,1,2$, collapse to a $1$-dimensional complex. The remaining $1,087$ ts-tricolourings occur in $445$ distinct triangulations. It follows that a total of $445$ (or $0.1\%$) of all $6$-pentachora triangulations admit a trisection.

Amongst the $445$ triangulations supporting at least one trisection, there are $11$ triangulations, nine of the $4$-dimensional sphere $S^4$ and two of $S^3 \times S^1$, supporting a trisection which is of minimal genus (e.g., due to Lemma~\ref{lem:g}). In particular, this demonstrates the well-known facts that $g(S^4)=0$ and $g(S^3 \times S^1)=1$. See Table~\ref{fig:minimal} for a list of all eight triangulations.

\begin{table}[thbp]
 \begin{center}
  \caption{$6$-pentachora census-triangulations supporting a trisection of minimal genus. \label{fig:minimal}}
  \begin{tabular}{l|c|c}
    \toprule
    iso. sig. & top. type & trisection \\
    \midrule
\texttt{gLMPMQccdeeeffffaaaa9aaaaaaaaaaaaa9a} & $S^3 \times S^1$ &$ (1; 1, 1, 1)$ \\
\texttt{gLwMQQcceeeffeffaaaaaaaaaaLaLaLaLaLa} & \dittostraight &\dittostraight \\
\texttt{gLAAMQbbcddeffffaaaaaaaaaaaaaaaaaaaa} & $S^4$ &$ (0; 0, 0, 0)$ \\
\texttt{gLAMMQbccddeffffaaaaaaaaaaaaaaaaaaaa} & \dittostraight &\dittostraight \\
\texttt{gLwPQQbbeeedffffaaaaaaaaaavaaaaaaava} & \dittostraight &\dittostraight \\
\texttt{gLAAMQbccddeffffaaaaaaaaaaaaaaaaaaaa} & \dittostraight &\dittostraight \\
\texttt{gvLQQQcdefdefeffya2aqbPbgaoavacafaba} & \dittostraight &\dittostraight \\
\texttt{gLAAMQbccdddffffaaaaaaaaaaaaaaaaaaaa} & \dittostraight &\dittostraight \\
\texttt{gLAPMQbbbeeeffffaaaaaaaaaaaaaaaaaaaa} & \dittostraight &\dittostraight \\
\texttt{gLAAMQbbbdddffffaaaaaaaaaaaaaaaaaaaa} & \dittostraight &\dittostraight \\
\texttt{gLAAMQbbbddeffffaaaaaaaaaaaaaaaaaaaa} & \dittostraight &\dittostraight \\
    \bottomrule
  \end{tabular}
 \end{center}
\end{table}

Table~\ref{fig:data} gives an overview of how many tricolourings, c-tricolourings, and ts-tricolourings can be expected from a triangulation admitting at least one tricolouring. As can be observed from the table, there exist triangulations in the census with as many as $51$ distinct tricolourings, and $48$ distinct ts-tricolourings.

For many of the triangulations with multiple ts-tricolourings, all trisections dual to them have central surfaces of the same genus and $4$-dimensional handlebodies of the same genera. For some, however, a smallest genus trisection occurs amongst a number of trisections of higher genera. For instance, one of the largest ranges of genera of central surfaces is exhibited by a triangulation with $15$ ts-tricolourings, supporting trisections of type $(0;0,0,0)$ ($\times 10$), $(1;1,0,0)$ ($\times 4$), and $(2;1,1,0)$  ($\times 1$). The triangulation is necessarily homeomorphic to $S^4$. Its isomorphism signature is given in Table~\ref{fig:minimal}, line $3$, or Appendix~\ref{app:isosigs}.

\begin{table}[hbt]
 \begin{center}
  \caption{Number of triangulations in the $6$-pentachora census of orientable closed $4$-manifolds having $k$ tricolourings ($\mathfrak{n}_{\operatorname{tc}}$ in second column), c-tricolourings ($\mathfrak{n}_{\operatorname{c}}$ in third column), and ts-tricolourings ($\mathfrak{n}_{\operatorname{ts}}$ in fourth column). \label{fig:data}}
  \begin{tabular}{r|rrr}
    \toprule
    $k$ & $\mathfrak{n}_{\operatorname{tc}}$ & $\mathfrak{n}_{\operatorname{c}}$ & $\mathfrak{n}_{\operatorname{ts}}$ \\
    \midrule
    $  1 $&$ 518 $&$ 248 $&$ 243 $ \\
    $  2 $&$ 155 $&$  76 $&$  79 $ \\
    $  3 $&$  67 $&$  56 $&$  55 $ \\
    $  4 $&$  24 $&$  36 $&$  34 $ \\
    $  5 $&$  22 $&$   0 $&$   0 $ \\
    $  6 $&$  15 $&$  16 $&$  16 $ \\
    $  8 $&$   4 $&$   6 $&$   6 $ \\
    $  9 $&$   4 $&$   0 $&$   0 $ \\
    $ 10 $&$   3 $&$   1 $&$   1 $ \\
    $ 11 $&$   1 $&$   0 $&$   0 $ \\
    $ 12 $&$   2 $&$   2 $&$   2 $ \\
    $ 15 $&$   4 $&$   4 $&$   5 $ \\
    $ 18 $&$   1 $&$   1 $&$   0 $ \\
    $ 24 $&$   0 $&$   2 $&$   2 $ \\
    $ 27 $&$   2 $&$   0 $&$   0 $ \\
    $ 36 $&$   0 $&$   1 $&$   1 $ \\
    $ 48 $&$   1 $&$   1 $&$   1 $ \\
    $ 51 $&$   1 $&$   0 $&$   0 $ \\
    \midrule
    $\Sigma$&$824$&$ 450 $&$ 445 $ \\
    \bottomrule
    \bottomrule
  \end{tabular}
 \end{center}
\end{table}

\paragraph{The non-orientable census}

The algorithm presented in Section~\ref{ssec:algo} does not require the triangulated $4$-manifold to be orientable and hence is able to construct trisections of non-orientable $4$-manifolds as well. These are defined in the same way as in the oriented case (see Definition~\ref{def:multisection}) but have slightly different properties (see Proposition~\ref{prop:nonor}). Namely, all parts of the decomposition must be non-orientable and the central surface must have even non-orientable genus. 

Running the algorithm on the $60\,413$ triangulations of the $6$-pentachoron census of closed non-orientable $4$-manifolds~\cite{4d-census} there are very few triangulations supporting a valid tricolouring and even fewer (that is, only four) supporting a trisection (see Table~\ref{fig:datanonor}). Two of them are triangulations of the boundary of a genus one non-orientable handlebody $S^3 \tilde{\times} S^1$ and both support a trisection of type $(2;1,1,1)$. The other two are triangulations of the $4$-dimensional real projective space $\mathbb{R}P^4$, both supporting a trisection of type $(4;1,1,1)$, see also Table~\ref{fig:altered}.

\begin{table}[hbt]
 \begin{center}
  \caption{Number of triangulations in the $6$-pentachora census of non-orientable closed $4$-manifolds having $k$ tricolourings ($\mathfrak{n}_{\operatorname{tc}}$ in second column), c-tricolourings ($\mathfrak{n}_{\operatorname{c}}$ in third column), and ts-tricolourings ($\mathfrak{n}_{\operatorname{ts}}$ in fourth column). \label{fig:datanonor}}
  \begin{tabular}{r|rrr}
    \toprule
    $k$ & $\mathfrak{n}_{\operatorname{tc}}$ & $\mathfrak{n}_{\operatorname{c}}$ & $\mathfrak{n}_{\operatorname{ts}}$ \\
    \midrule

    $  1 $&$  12 $&$  11 $&$   4 $ \\
    $  2 $&$   1 $&$   1 $&$   0 $ \\

    \bottomrule
  \end{tabular}
 \end{center}
\end{table}


\begin{table}[thbp]
 \begin{center}
  \caption{Non-orientable tricolourable $6$-pentachora census-triangulations, their topological types, number of bistellar $2$-$4$-moves necessary for them to support a trisection, and minimum genus of supported trisection. \label{fig:altered}}
  \begin{tabular}{l|c|c|c}
    \toprule
    iso. sig. & top. type & $\#$ moves & min. genus \\
    \midrule
    \texttt{gLALQQbbbdefefffaaaaaaaaaa3b3b3b3b3b} & $\mathbb{R}P^4$ &$1 $ & $ (4;1,1,1) $ \\
    \texttt{gLwMQQcceeeffeffaaaaaaaaaa9a9a9a9a9a} & $S^3 \tilde{\times} S^1$ &$0 $ & $ (2;1,1,1) $ \\ 
    \texttt{gLMPMQccdeeeffffaaaa3aaaaaaaaaaaaa3a} & $S^3 \tilde{\times} S^1$ &$1 $ & $ (2;1,1,1) $ \\
    \texttt{gLMPMQccdeeeffffaaaaabaaaaaaaaaaaaab} & $S^3 \tilde{\times} S^1$ &$0 $ & $ (2;1,1,1) $ \\ 
    \texttt{gLLAQQcddcfdefffaaVbVbVbxbVbbaaaaaaa} & $\mathbb{R}P^4$ &$0 $ & $ (4;1,1,1) $ \\
    \texttt{gLLAQQbdedfefeefaadbdbdbaaoaaadbdboa} & $\mathbb{R}P^4$ &$1 $ & $ (6;2,1,1) $ \\
    \texttt{gLLAQQbdedffeeefaadbdbdbaaaavadbdbva} & $\mathbb{R}P^4$ &$1 $ & $ (6;2,1,1) $ \\ 
    \texttt{gLAAMQacbdcdefffcaTava4acavaya1aYa2a} & $S^3 \tilde{\times} S^1$ &$3 $ & $ (8;2,2,2) $ \\ 
    \texttt{gvLQQQcdefdfefefya2aqbPbgaGbjbSbvbba} & $\mathbb{R}P^4$ &$0 $ & $ (4;1,1,1) $ \\
    \texttt{gLALQQbbbeffefefaaaaaaaaaasbsbsbsbsb} & $\mathbb{R}P^4$ &$2 $ & $ (4;1,1,1) $ \\
    \texttt{gLAMPQbbcdeffeffaaaaaaaaaadbdbdbdbdb} & $\mathbb{R}P^4$ &$1 $ & $ (6;2,1,1) $ \\ 
    \texttt{gLALQQbbcdfeefffaaaaaaaaaawbwbwbwbwb} & $\mathbb{R}P^4$ &$1 $ & $ (4;1,1,1) $ \\
    \texttt{gLALQQbbcdeeffffaaaaaaaaaahahahahaha} & $\mathbb{R}P^4$ &$1 $ & $ (4;1,1,1) $ \\
    \bottomrule
  \end{tabular}
 \end{center}
\end{table}

\begin{proposition}
  The trisection genus of $S^3 \tilde{\times} S^1$ is two and the trisection genus of $\mathbb{R}P^4$ is four.
\end{proposition}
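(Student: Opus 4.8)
The plan is to sandwich each trisection genus between a lower bound coming from Lemma~\ref{lem:g} and an upper bound realised by one of the census triangulations recorded in Table~\ref{fig:altered}. Since both manifolds are non-orientable, the relevant form of Lemma~\ref{lem:g} is $g \geq 2(\beta_1(M,\mathbb{Z}_2) + \beta_2(M,\mathbb{Z}_2))$, so the first task is to compute the $\mathbb{Z}_2$-Betti numbers of each manifold, and the second is to exhibit explicit trisections meeting these bounds.

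For $M = S^3 \tilde{\times} S^1$ I would regard $M$ as the mapping torus of an orientation-reversing self-homeomorphism of $S^3$. With $\mathbb{Z}_2$-coefficients the induced monodromy on $H_*(S^3;\mathbb{Z}_2)$ is necessarily the identity, since the only nontrivial groups sit in degrees $0$ and $3$ where $\operatorname{Aut}(\mathbb{Z}_2)$ is trivial. The Wang exact sequence for the bundle over $S^1$ then yields $\beta_*(M,\mathbb{Z}_2) = (1,1,0,1,1)$, exactly as for the product $S^3 \times S^1$; hence $\beta_1 + \beta_2 = 1$ and the lower bound reads $g \geq 2$. For $M = \mathbb{R}P^4$ the $\mathbb{Z}_2$-homology is $\mathbb{Z}_2$ in every degree $0,\dots,4$, so $\beta_1 + \beta_2 = 2$ and the lower bound reads $g \geq 4$.

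For the matching upper bounds I would invoke the explicit computations underlying Table~\ref{fig:altered}: two census triangulations of $S^3 \tilde{\times} S^1$ support a trisection of type $(2;1,1,1)$, and two census triangulations of $\mathbb{R}P^4$ support a trisection of type $(4;1,1,1)$. Each such trisection certifies $g(S^3 \tilde{\times} S^1) \leq 2$, respectively $g(\mathbb{R}P^4) \leq 4$. Combining the two bounds in each case forces equality, giving $g(S^3 \tilde{\times} S^1) = 2$ and $g(\mathbb{R}P^4) = 4$.

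The main obstacle is conceptual rather than computational: one must use $\mathbb{Z}_2$-coefficients throughout. It is precisely the vanishing of the monodromy action modulo $2$ that makes the twisted bundle $S^3 \tilde{\times} S^1$ share its Betti numbers with the product, and it is the $\mathbb{Z}_2$-orientability of every manifold -- used in the proof of Lemma~\ref{lem:g} through Poincar\'e duality -- that makes the lower bound applicable and sharp for non-orientable inputs. A secondary point, already discharged by the algorithm of Section~\ref{ssec:algo}, is the verification that the tabulated tricolourings are genuine ts-tricolourings, so that the dual cubical structures really do assemble into trisections of the stated types; granting this, the argument is just the comparison of the two bounds.
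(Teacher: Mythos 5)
Your proof is correct and follows exactly the paper's own argument: the paper proves this proposition by combining the lower bound of Lemma~\ref{lem:g} with the explicit census trisections of type $(2;1,1,1)$ and $(4;1,1,1)$ recorded in Table~\ref{fig:altered}. The only difference is that you spell out the $\mathbb{Z}_2$-Betti number computations (via the Wang sequence for $S^3 \tilde{\times} S^1$) that the paper leaves implicit, which is a welcome but not substantively different addition.
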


\begin{proof}
  This is an immediate corollary of the experiments summarised in Table~\ref{fig:altered}, and the statement of Lemma~\ref{lem:g}.
\end{proof}

\begin{lemma}
  \label{prop:twofour}
  Let $\tri$ be a triangulation of a $4$-manifold $M$ and let $\tri'$ be obtained from $\tri$ by a $2$-$4$-move. If $\tri$ is not tricolourable, then $\tri'$ is not tricolourable. If $\tri$ is tricolourable, then $\tri'$ is tricolourable if, up to permuting colours, the $2$-$4$-move either
  \begin{enumerate}
    \item removes a tetrahedron with colours $bbgg$, or
    \item removes a tetrahedron with colours $bgrr$ and inserts an edge with colours $bg$.
  \end{enumerate}
  For (1), either the genus of $H_r$ is increased by one or its number of connected components is decreased by one. For (2), $H_b$, $H_g$ and $H_r$ do not change. If $\tri''$ is obtained from $\tri$ by performing a $2$-$4$-move across every bi-coloured tetrahedron of $\tri$ (moves of type (1)), then $\tri''$ supports a trisection.
\end{lemma}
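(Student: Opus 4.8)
The plan is to verify, for the tricolouring that $\tri''$ inherits from $\tri$, the two defining conditions of a ts-tricolouring: connectivity of the graphs $\Gamma_b,\Gamma_g,\Gamma_r$, and collapsibility of the spines $\gamma_b,\gamma_g,\gamma_r$ onto $1$-complexes. First I would observe that the bi-coloured tetrahedra of a tricoloured triangulation form a perfect matching on its pentachora: a pentachoron with colour pattern $bbggr$ has exactly one bi-coloured facet, namely the tetrahedron $bbgg$ opposite its singleton vertex, and any pentachoron containing a fixed $bbgg$ tetrahedron as a facet must complete it by a red apex, so the two pentachora meeting along a bi-coloured tetrahedron are paired by it. Consequently the type-(1) moves across all bi-coloured tetrahedra act on pairwise disjoint pairs of pentachora and may be performed simultaneously; each is a genuine type-(1) move (the two apices are forced to carry the singleton colour), so by part~(1) of the lemma the tricolouring survives as a tricolouring $\mathcal{P}''$ of $\tri''$. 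Since a $2$-$4$-move introduces no new vertices and only inserts one new edge joining the two apices, each $\Gamma_k(\tri'')$ contains $\Gamma_k(\tri)$; in particular, if the tricolouring of $\tri$ is a c-tricolouring then so is $\mathcal{P}''$, and it remains only to treat collapsibility.

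The key structural point I would establish next is that, after the moves, every bi-coloured tetrahedron of $\tri''$ is one of the internal tetrahedra created by a move, and hence contains exactly one of the newly inserted edges. Indeed, each bi-coloured tetrahedron of $\tri$ is destroyed by its own move, while every facet surviving from $\tri$ is tri-coloured: the boundary tetrahedra of a move-ball are of the form $\{r\}\cup\sigma$ with $\sigma$ a triangle of the removed $bbgg$, hence tri-coloured. Recalling from Figure~\ref{fig:pieces} that the $2$-cubes of a spine $\gamma_k$ correspond to the bi-coloured tetrahedra that avoid colour $k$, this says that every $2$-cube of every spine $\gamma_k(\tri'')$ sits in a tetrahedron through a new edge.

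The crux is then a free-face argument showing that each such $2$-cube collapses. Consider a new tetrahedron $b_1b_2r_ar_b$ with new edge $e=r_ar_b$; its $2$-cube $C$ in $\gamma_g$ has a boundary $1$-cube lying in the triangle $b_1r_ar_b$, a facet of the tetrahedron that contains $e$. Because $e$ lies in the interior of the $4$-ball of its move, every triangle through $e$ lies inside that ball, so no tetrahedron outside the ball meets this $1$-cube; and within the ball the only tetrahedron on $b_1r_ar_b$ with colours $bbrr$ is $b_1b_2r_ar_b$ itself (the alternatives $b_1g_1r_ar_b$ and $b_1g_2r_ar_b$ are tri-coloured). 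Hence the $1$-cube is free in $\gamma_g$, and collapsing $C$ across it removes the $2$-cube while leaving the remaining edges as part of a $1$-complex. These free edges live in distinct move-balls and distinct spines, so the elementary collapses do not interfere; performing all of them removes every $2$-cube, so each spine $\gamma_k(\tri'')$ collapses to a $1$-complex. Thus each $3$-dimensional trisection submanifold is a handlebody, $\mathcal{P}''$ is a ts-tricolouring, and by definition $\tri''$ supports a trisection.

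I expect the free-face step to be the main obstacle: one must confirm that the distinguished boundary $1$-cube is genuinely free in its spine, ruling out a second bi-coloured tetrahedron both inside and outside the ball, and that the simultaneous collapses across all move-balls remain compatible. A secondary point to pin down is the role of the connectivity hypothesis and of degenerate gluings (a bi-coloured tetrahedron identified with itself, or coinciding apices), for which the matching and the free-face picture need minor adjustment. This constructive argument can be read as a sharpening of the statement from \cite{Bell-computing-2017} that repeated $2$-$4$-moves eventually yield a triangulation compatible with a trisection.
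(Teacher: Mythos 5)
Your proposal addresses only the final claim of the lemma, and for that claim it takes a genuinely different route from the paper: the paper disposes of this part in one sentence by citing the proof of correctness of the algorithm in \cite{Bell-computing-2017}, whereas you reconstruct a direct argument. Your collapsibility analysis is essentially correct: the bi-coloured tetrahedra do induce a perfect matching on the pentachora, all bi-coloured tetrahedra of $\tri$ are destroyed while every surviving old tetrahedron is tri-coloured, each new bi-coloured tetrahedron contains exactly one new monochromatic edge, and the $1$-cube in the triangle spanned by a singleton-coloured vertex and the new edge is a free face of the unique $2$-cube containing it. (One of your worries is vacuous: since a pentachoron has exactly one bi-coloured facet, a bi-coloured tetrahedron can never be glued to another facet of the same pentachoron; coinciding apices can occur but are harmless.)

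The genuine gap is connectivity. You only check that c-tricolourability is \emph{preserved} (``if the tricolouring of $\tri$ is a c-tricolouring then so is the inherited one''), deferring the general case as a `secondary point'. But the lemma assumes only that $\tri$ is tricoloured, and ``supports a trisection'' means by definition that the inherited colouring of $\tri''$ is a ts-tricolouring, which is in particular a c-tricolouring: each graph $\Gamma_k(\tri'')$ must be connected. This hypothesis-free version is exactly what the paper needs: among the $13$ tricolourable non-orientable census triangulations treated via this lemma, Table~\ref{fig:datanonor} shows that one admits no c-tricolouring at all. As written, your argument therefore proves a strictly weaker statement than the one claimed.

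The gap can be closed with one further observation, which also explains why moving across \emph{every} bi-coloured tetrahedron is the right hypothesis for connectivity just as much as for collapsibility. Every pentachoron carries all three colours in the pattern $2{+}2{+}1$, so within a single pentachoron the vertices of colour $k$ span a connected subgraph of $\Gamma_k(\tri'')$ (a singleton, or a pair joined by an edge of that pentachoron). If two pentachora of $\tri$ share a tetrahedron $t$, then either $t$ contains a vertex of colour $k$, which places the colour-$k$ vertices of both pentachora in one component, or $t$ avoids colour $k$, in which case $t$ is bi-coloured, both pentachora have colour-$k$ singletons as apices, and the move across $t$ joins these by a new edge. Since $M$ is connected, the dual (pentachoron-adjacency) graph of $\tri$ is connected, and hence each $\Gamma_k(\tri'')$ is connected. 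Finally, note that a complete proof of the lemma still requires the paper's short case analysis for the remaining assertions (preservation of non-tricolourability, the colour patterns in (1) and (2), and the effect on $H_b$, $H_g$, $H_r$); your write-up invokes part (1) rather than proving it.
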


\begin{proof}
  First, assume that $\tri$ is not tricolourable (i.e., it does not admit a valid tricolouring on its set of vertices). Since a $2$-$4$-move does not change the vertex set of $\tri$ and only inserts new triangles, every invalid tricolouring on the vertex set of $\tri$ must be invalid on $\tri'$. 

  Hence, let $\tri$ admit a tricolouring. Then $\tri$ contains bi-coloured tetrahedra. Let $t$ be one of them with colours, say, $bbgg$. It follows, that $t$ is contained in exactly two pentachora $\Delta_1$ and $\Delta_2$ both coloured $bbggr$. Removing $t$ by a $2$-$4$-move necessarily introduces a mono-chromatic edge between the $r$-coloured vertices of $\Delta_1$ and $\Delta_2$.  In particular, the collection of handlebodies $H_r$ that is a regular neighbourhood of the $r$-coloured subgraph of $\tri$ either gains a handle or loses a connected component. 

  If the $2$-$4$-move is not performed at a bi-coloured tetrahedra, we can assume it removes a tetrahedron $t$ coloured $bbgr$ (up to permuting colours). It follows that $t$ is contained in two pentachora $\Delta_1$ and $\Delta_2$ coloured $bbggr$ or $bbgrr$. If the extra vertices of $\Delta_1$ and $\Delta_2$ have the same colour (that is, if a mono-chromatic edge is introduced by the $2$-$4$-move), then the $2$-$4$-move introduces a monochromatic triangle. If the extra vertices have distinct colours (necessarily $g$ and $r$) then the given tricolouring is still valid after the bistellar move. Moreover, the topology of the respective collections of $4$-dimensional handlebodies does not change under this operation.

  The last statement of the proposition follows from the proof of correctness of the algorithm from~\cite{Bell-computing-2017}. 
\end{proof}

Motivated by the last statement of Lemma~\ref{prop:twofour} (i.e.,~\cite{Bell-computing-2017}) we iteratively apply $2$-$4$-moves to the $13$ tricolourable $6$-pentachora triangulations of non-orientable $4$-manifolds. The results are summarised in Table~\ref{fig:altered}. Note that some of the trisections supported by triangulations after applying some $2$-$4$-moves are still of minimal genus. By Lemma~\ref{prop:twofour} this suggests that some bistellar moves to obtain this trisection must have been of type (2). This is in particular the case, if the bistellar move is performed on a c-tricolouring and thus a move of type (1) must increase the genus of one of the $4$-dimensional handlebodies. This motivates to look for small genus trisections by performing bistellar $2$-$4$-moves of type (2) on a (c-)tricolourable triangulation.

However, by running through all triangulations obtainable from one of the $13$ tricolourable census-triangulations by up to three $2$-$4$-moves we see that the range of genera of trisections supported by those triangulations is quite wide, see Table~\ref{fig:range}. Again, this is in-line with the statements of Lemma~\ref{prop:twofour}.

\begin{table}[bhpt]
 \begin{center}
  \caption{Range of genera of supported trisections of the $13$ non-orientable tricolourable census-triangulations after applying three bistellar $2$-$4$-moves in all possible ways. \label{fig:range}}
{\small
  \begin{tabular}{l|lll}
    \toprule
    iso. sig. & genera of trisections supported  \\
    \midrule
    \texttt{gLALQQbbbdefefffaaaaaaaaaa3b3b3b3b3b} &  $(4; 1, 1, 1),(6; 2, 1, 1),(8; 3, 1, 1),(8; 2, 2, 1)$\\
    \texttt{gLwMQQcceeeffeffaaaaaaaaaa9a9a9a9a9a} &  $(2; 1, 1, 1),(4; 2, 1, 1),(6; 3, 1, 1),(6; 2, 2, 1),(8; 3, 2, 1),(8; 2, 2, 2)$\\
    \texttt{gLMPMQccdeeeffffaaaa3aaaaaaaaaaaaa3a} &  $(2; 1, 1, 1),(4; 2, 1, 1),(6; 3, 1, 1),(6; 2, 2, 1)$\\
    \texttt{gLMPMQccdeeeffffaaaaabaaaaaaaaaaaaab} &  $(2; 1, 1, 1),(4; 2, 1, 1),(6; 3, 1, 1),(6; 2, 2, 1),(8; 3, 2, 1),(8; 2, 2, 2)$\\ 
    \texttt{gLLAQQcddcfdefffaaVbVbVbxbVbbaaaaaaa} &  $(4; 1, 1, 1),(6; 2, 1, 1),(8; 3, 1, 1),(8; 2, 2, 1),(10; 3, 2, 1),(10; 2, 2, 2)$\\
    \texttt{gLLAQQbdedfefeefaadbdbdbaaoaaadbdboa} &  $(6; 2, 1, 1),(8; 3, 1, 1),(8; 2, 2, 1),(10; 3, 2, 1),(10; 2, 2, 2)$\\
    \texttt{gLLAQQbdedffeeefaadbdbdbaaaavadbdbva} &  $(6; 2, 1, 1),(8; 3, 1, 1),(8; 2, 2, 1),(10; 3, 2, 1),(10; 2, 2, 2)$\\ 
    \texttt{gLAAMQacbdcdefffcaTava4acavaya1aYa2a} &  $(8; 2, 2, 2)$\\ 
    \texttt{gvLQQQcdefdfefefya2aqbPbgaGbjbSbvbba} &  $(4; 1, 1, 1),(6; 2, 1, 1),(8; 3, 1, 1),(8; 2, 2, 1),(10; 3, 2, 1),(10; 2, 2, 2)$\\
    \texttt{gLALQQbbbeffefefaaaaaaaaaasbsbsbsbsb} &  $(4; 1, 1, 1),(6; 2, 1, 1)$\\
    \texttt{gLAMPQbbcdeffeffaaaaaaaaaadbdbdbdbdb} &  $(6; 2, 1, 1),(8; 3, 1, 1),(8; 2, 2, 1),(10; 3, 2, 1),(10; 2, 2, 2)$\\ 
    \texttt{gLALQQbbcdfeefffaaaaaaaaaawbwbwbwbwb} &  $(4; 1, 1, 1),(6; 2, 1, 1),(8; 3, 1, 1),(8; 2, 2, 1)$\\
    \texttt{gLALQQbbcdeeffffaaaaaaaaaahahahahaha} &  $(4; 1, 1, 1),(6; 2, 1, 1),(8; 3, 1, 1),(8; 2, 2, 1)$\\
    \bottomrule
  \end{tabular}
}
 \end{center}
\end{table}

\section{Trisection genus for all known standard simply connected PL $4$-manifolds}
\label{sec:simply}

This section contains the proofs of Theorems~\ref{thm:main} and \ref{thm:2}. We start with a purely theoretical observation.

\begin{lemma}
  \label{lem:sc}
  Let $M$ be a simply connected $4$-manifold with second Betti number $\beta_2$, and let $\tri$ be a triangulation of $M$ coming from a simple crystallisation. Then $\tri$ admits $15$ c-tricolourings.

  Moreover, if some of these c-tricolourings are in fact ts-tricolourings, then their dual trisections must be of type $(\beta_2; 0,0,0)$. In particular they must all be minimal genus trisections.
\end{lemma}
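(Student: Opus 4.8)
The plan is to exploit the very rigid combinatorics forced by a simple crystallisation. Recall that a crystallisation is contracted, so $K(G)=|\tri|$ has exactly five vertices, one for each of the edge-colours $0,\dots,4$ of the graph $G$, and each pentachoron contains each of these five vertices exactly once. Simplicity says the $1$-skeleton of $\tri$ has exactly ten edges and coincides with the $1$-skeleton of a single pentachoron; that is, the $1$-skeleton is the complete graph on the five vertices. Every pentachoron of $\tri$ therefore carries all five vertices, and any two vertices of $\tri$ are joined by an edge. These two facts reduce the whole statement to elementary bookkeeping on a five-element set, and I would state them first, being careful to keep the edge-colours $0,\dots,4$ of $G$ notationally separate from the three vertex-classes $P_0,P_1,P_2$ of a tricolouring.

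First I would count the tricolourings. Since every pentachoron contains all five vertices, the restriction of a vertex partition $\{P_0,P_1,P_2\}$ to any pentachoron is just the partition itself; hence the defining condition of a tricolouring --- that each pentachoron meets two parts in two vertices and the third part in one vertex --- is equivalent to requiring that the three parts have sizes $2,2,1$. Counting partitions of a five-element set of this shape (choose the singleton, then split the remaining four into two unordered pairs) gives $5\cdot 3=15$, so $\tri$ has exactly $15$ tricolourings. I would then upgrade each of these to a c-tricolouring: because the $1$-skeleton of $\tri$ is complete, the graph $\Gamma_k$ spanned by a part $P_k$ is a complete graph on $|P_k|\le 2$ vertices, hence a single vertex or a single edge, which is connected. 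Thus all $15$ tricolourings are automatically c-tricolourings, which proves the first assertion.

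For the ``moreover'' part, suppose one of these c-tricolourings is in fact a ts-tricolouring, so that it is dual to a genuine trisection. The genus $g_i$ of the $4$-dimensional handlebody $H_i$ equals the first Betti number of its spine $\Gamma_i$; but each $\Gamma_i$ is a tree --- a single vertex for the singleton part, a single edge for a size-two part --- so $b_1(\Gamma_i)=0$ and hence $g_0=g_1=g_2=0$. To pin down the genus of the central surface I would substitute $g_0=g_1=g_2=0$ into the Euler-characteristic identity $\chi(M)=2+(1-\chi(\Sigma)/2)-g_0-g_1-g_2$ used in the proof of Lemma~\ref{lem:g}. Since $M$ is simply connected it is orientable, so $\Sigma$ is orientable by Proposition~\ref{prop:nonor} and $\chi(\Sigma)=2-2g$; together with $\chi(M)=2+\beta_2$ for a simply connected $4$-manifold this forces $g=\beta_2$. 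The trisection is therefore of type $(\beta_2;0,0,0)$, and by Lemma~\ref{lem:000} any trisection all of whose $4$-dimensional handlebodies are $4$-balls is a minimal genus trisection, giving the final claim.

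The argument is essentially combinatorial once the structural facts are in place, so I do not expect a genuine obstacle; the only points requiring care are the clean separation of the two different ``colourings'' in play (the edge-colours of $G$ versus the vertex-classes $P_i$) and the justification that orientability of $M$, and hence of $\Sigma$, is what licenses the use of $\chi(\Sigma)=2-2g$ rather than the non-orientable formula $\chi(\Sigma)=2-g$.
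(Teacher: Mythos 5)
Your proof is correct, and the first half is essentially the paper's own argument: both use the fact that every pentachoron carries all five vertices to reduce tricolourings to $(2,2,1)$-partitions of a five-element set (giving $5\cdot 3=15$), and both get connectivity of each $\Gamma_k$ for free because each part spans a single vertex or a single edge. Where you genuinely diverge is the computation of the central surface genus. The paper works combinatorially: it determines the $f$-vector $f(\tri)=(5,10,10\beta_2+10,15\beta_2+5,6\beta_2+2)$ via the Dehn--Sommerville equations, counts the $4\beta_2+4$ tricoloured triangles and the $6\beta_2+2$ quadrilaterals making up $\Sigma$, and reads off $\chi(\Sigma)=2-2\beta_2$ directly from this cell structure. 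You instead argue topologically: each spine $\Gamma_i$ is a tree, so $g_0=g_1=g_2=0$ (valid because the ts-tricolouring hypothesis guarantees $H_i$ is a handlebody collapsing onto $\Gamma_i$, so its genus is the rank of its free fundamental group, i.e.\ $b_1(\Gamma_i)$), and then you substitute $\chi(M)=2+\beta_2$ and $\chi(\Sigma)=2-2g$ into the identity $\chi(M)=2+(1-\chi(\Sigma)/2)-g_0-g_1-g_2$ from the proof of Lemma~\ref{lem:g}, using Proposition~\ref{prop:nonor} to justify orientability of $\Sigma$. Your route is shorter and avoids the $f$-vector bookkeeping, at the cost of losing the explicit cell structure of $\Sigma$ that the paper's count yields as a by-product (and which is in the spirit of the paper's constructive, triangulation-based viewpoint). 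Both proofs close identically, invoking Lemma~\ref{lem:000} for minimality (the paper also cites Lemma~\ref{lem:g}, which your computation $g=\beta_2$ would equally support).
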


\begin{proof}
  A triangulation of a $4$-manifold $M$ coming from a simple crystallisation is a simplicial cell complex $\tri$ such that a) $M \cong_{PL} |\tri|$, b) no face has self-identifications, and c) every pentachoron shares the same $5$ vertices and the same $10$ edges.

  Given such a triangulation $\tri$, property c) ensures, that there are $15$ ways to properly tricolour the $5$ vertices of $\tri$ (one colour colours a single vertex and the remaining two colours colour the remaining four vertices in pairs), and each one of them colours every pentachoron equally and thus in a valid way. Moreover, also because of property c), every colour either spans a single edge or a single vertex of $\tri$, and because of property b) a neighbourhood of this edge or vertex is diffeomeomorphic to a $4$-ball. In particular, $M$ decomposes into three $4$-balls (i.e., handlebodies of genus zero) and all possible $15$ tricolourings from above are in fact c-tricolourings.

  Hence, let us assume that the $3$-dimensional trisection submanifolds dual to some of the c-tricolourings are handlebodies -- and thus some of the c-tricolourings are, in fact, ts-tricolourings. It remains to determine the genus of the central surface of the trisection dual to these ts-tricolourings.

  The $f$-vector of $\tri$ is given by $$f(\tri) = (5,10,10\beta_2+10,15\beta_2+5,6\beta_2+2),\footnote{Property c) in the definition of a simple crystallisation states that $\tri$ must have $5$ vertices and $10$ edges. Since all ten edges of $\tri$ are contained in a single pentachoron, $M \cong_{PL} |\tri|$ must be simply connected and thus its Euler characteristic is given by $\chi(M) = 2+\beta_2 (M,\mathbb{Z})$. The other entries of the $f$-vector are then determined by the Dehn--Sommerville equations for $4$-manifolds.}$$ where $\beta_2 = \dim (H_2 (M,\mathbb{Z}))$. Every pair of the $5$ vertices spans exactly one of the $10$ edges, and each of the $10$ boundaries of triangles spanned by the $10$ edges of $\tri$ bounds exactly $(\beta_2+1)$ parallel copies of triangles. It follows that there are exactly $4\beta_2 +4$ tricoloured triangles. The central surface is thus spanned by $6\beta_2 +2$ quadrilaterals, $4\beta_2+4$ vertices, and is (orientable) of Euler characteristic $2-2\beta_2$. Hence, its genus is equal to the second Betti number of~$M$.

  The trisection must be a minimal genus trisection for (at least) two reasons: (i) all of its $4$-dimensional handlebodies are of genus zero (Lemma~\ref{lem:000}), and (ii) the genus of its central surface equals the second Betti number of $M$ (Lemma~\ref{lem:g}).
\end{proof}

\begin{proof}[Proof of Theorem~\ref{thm:main}]
  In \cite{BaSCryst}, Basak and the first author constructively show that there exists a simple crystallisation of the $K3$ surface. The result now follows from Lemma~\ref{lem:sc} together with a check of the collapsibility of the $3$-dimensional trisection submanifolds for a particular example triangulation coming from a simple crystallisation.

  The isomorphism signature given in Appendix~\ref{app:isosigs} belongs to such a singular triangulation of the $K3$ surface supporting a trisection.
\end{proof}

\begin{proof}[Proof of Theorem~\ref{thm:2}]
  Given an arbitrary standard simply connected $4$-manifold $M$ and a simple crystallisation of $M$, Lemma~\ref{lem:sc} tells us that its associated triangulation $\tri$ admits $15$ c-tricolourings.
  It remains to show for a particular such triangulation of $M$ and its $15$ c-tricolourings that every $3$-dimensional trisection submanifold is a handlebody, that is, it retracts to a $1$-dimensional complex.

  For this, w.l.o.g. assume that for every choice of c-tricolouring, the colour colouring only one vertex is blue $b$, and the colours colouring two vertices are red $r$ and green $g$. By construction, the two $3$-dimensional trisection submanifolds defined by $b$ and $r$ ($b$ and $g$) retract to the (multi-)graph whose vertices are the mid-points of the two edges with endpoints coloured by $b$ and $r$ ($b$ and $g$), and whose edges are normal arcs parallel to the monochromatic edge in the $\beta_2+1$ triangles coloured $rrb$ ($ggb$ respectively). In particular, these two $3$-dimensional trisection submanifolds retract to a $1$-dimensional complex of genus~$\beta_2$.

  The third $3$-dimensional trisection submanifold defined by $r$ and $g$ initially retracts to a $2$-dimensional subcomplex $Q_{rg}$ with $4$ vertices, one for every $rg$-coloured edge, $4\beta_2 + 4$ edges, one for each $rrg$- and each $rgg$-coloured triangle, and $3\beta_2+1$ quadrilaterals, one for each $rrgg$-coloured tetrahedron. This $2$-dimensional complex might or might not continue to collapse to a $1$-dimensional complex depending on the combinatorial properties of $\tri$. If $Q_{rg}$ collapses, however, it must collapse to a $1$-dimensional complex of genus~$\beta_2$.

  The unique simple crystallisation of $\mathbb{C}P^2$ as well as $266$ of the $267$ simple crystallisations of $S^2 \times S^2$ translate to triangulations where all of the $15$ c-tricolourings (guaranteed by Lemma~\ref{lem:sc}) are in fact ts-tricolourings. Moreover, there exist various simple triangulations of the $K3$ surface with this property. See \cite{BaSCryst} for pictures of representative simple crystallisations of $\mathbb{C}P^2$, $S^2 \times S^2$ and $K3$. These particular simple crystallisations turn out to have associated triangulations with $15$ ts-tricolourings. See Appendix~\ref{app:isosigs} for their isomorphism signatures.

  Assume that for all connected sums $N$ of these three prime simply connected $4$-manifolds with arbitrary orientation and second Betti number $\leq k$, there always exists a triangulation coming from a simple crystallisation with all $15$ c-tricolourings being also ts-tricolourings.

  It remains to show that for two such manifolds $N_1$ and $N_2$, there exists a triangulation coming from a simple crystallisation of $N_1 \# N_2$ with this property. By the induction hypothesis we can assume that there exist such triangulations $\tri_i$ of $N_i$, $ i = 1,2$, with all $15$ c-tricolourings producing $2$-complexes $Q_{rg} (\tri_i)$, $i = 1,2$, collapsing to a $1$-dimensional complex.

  Fix a ts-tricolouring on $\tri_1$ and a collapsing sequence of the quadrilaterals of $Q_{rg} (\tri_1)$. Remove one of the two pentachora containing the quadrilateral which is collapsed last. Similarly, fix a ts-tricolouring and collapsing sequence on $\tri_2$ and remove one of the two pentachora containing the quadrilateral removed first. Glue together both triangulations along their boundaries such that the edges through which the last quadrilateral of $Q_{rg} (\tri_1)$ and the first quadrilateral of $Q_{rg} (\tri_2)$ are collapsed, are aligned. For this, colours $r$ and $g$ of $\tri_2$ might need to be swapped (note that such a swap in colours does not change the ts-tricolouring class as such). The collapsing sequence can now be concatenated, yielding a collapsing sequence for~$Q_{rg} (\tri_1 \# \tri_2)$.

  Moreover, the property of a triangulation of coming from a simple crystallisation is preserved under taking connected sums of the type as described above \cite{BaSCryst}. Hence, repeating this process for all $15$ tricolourings finishes the proof.
\end{proof}

\bibliographystyle{plain}
\bibliography{trisection_genus}
\newpage
\appendix

\section{Isomorphism signatures of triangulations of $S^4$, $\mathbb{C}P^2$, $S^2 \times S^2$ and the $K3$ surface}
\label{app:isosigs}

Isomorphism signatures of the triangulations associated to three simple crystallisations of $\mathbb{C}P^2$, $S^2 \times S^2$ and the $K3$ surface, and a triangulation homeomorphic to $S^4$. Each of them admits $15$ ts-tricolourings (see Section~\ref{sec:simply} for details, see \cite{BaSCryst} for pictures).

To construct the triangulations download \emph{Regina} \cite{regina}, and produce a new $4$-manifold triangulation by selecting type of triangulation ``From isomorphism signature'' and pasting in one of the strings given below (please note that you need to remove newline characters before pasting in the isomorphism signature of the $K3$ surface).

\subsection*{Unique $8$-pentachora simple crystallisation of $\mathbb{C}P^2$}

\begin{verbatim}
iLvLQQQkbghhghhfffggfaaaaaaaaaaaaaaaaaaaaaaaaaa
\end{verbatim}

\subsection*{$14$-pentachora simple crystallisation of $S^2 \times S^2$}

\begin{verbatim}
oLvMPLQAPMQPkbfgghjihhiilkkmllmnnnnaaaaaaaaaaaaaaaaaaaaaaaaaaaaaaaaaaaaaaaaaaaa
\end{verbatim}

\subsection*{$134$-pentachora simple crystallisation of the $K3$ surface}

\begin{verbatim}
-cgcLvLALLvvwwzvzMvwLAwvwvvwvwAAPvPPwzQQwMvPAQLQzQzPwLvwvPzvPwQLAPAPQMQwMQQQQAQ
zQQQQQAQPQQQQQQQQQwzvQQMMMMQQQQQQQQPkcahafafakaiasauaxapaBaDaGaKaDaFaFaQaWa3a6a
1aTa7aYaSaTa-aVa4a9aVaab9abbabPaPaZaibPagbfb5a3a5a1aZa0a2aOa-aVa-a8ahbQaubvbubz
bAbGbIbHbHbNbObQbQbKbLbRbHbvbLbwbMbTbTbububWbGbXbHbObRbDbYbNbYbKbwbObSbGbIbVbMb
MbZbZbGbVb1bWbxbTbPbWbXb0b3bPbPb3bCbCb3bYb0bJbybybJb2bIbvbIbvb3bzbxbzb5bWb1b5bz
bAbDbRb4bFbDbSb2bUbUb2bUbUb4bKbCbybybCb8bac-b+b+b+b9b9b8b8bbc6bdc6becacbc9b9bdc
6bec-b7bdc-b-bdcecccaccc7b+b8bbc6b7b7bfcfcfcfcaaaaaaaaaaaaaaaaaaaaaaaaaaaaaaaaa
aaaaaaaaaaaaaaaaaaaaaaaaaaaaaaaaaaaaaaaaaaaaaaaaaaaaaaaaaaaaaaaaaaaaaaaaaaaaaaa
aaaaaaaaaaaaaaaaaaaaaaaaaaaaaaaaaaaaaaaaaaaaaaaaaaaaaaaaaaaaaaaaaaaaaaaaaaaaaaa
aaaaaaaaaaaaaaaaaaaaaaaaaaaaaaaaaaaaaaaaaaaaaaaaaaaaaaaaaaaaaaaaaaaaaaaaaaaaaaa
aaaaaaaaaaaaaaaaaaaaaaaaaaaaaaaaaaaaaaaaaaaaaaaaaaaaaaaaaaaaaaaaaaaaaaaaaaaaaaa
aaaaaaaaaaaaaaaaaaaaaaaaaaaaaaaaaaaaaaaaaaaaaaaaaaaaaaa
\end{verbatim}

\subsection*{Triangulation homeomorphic to $S^4$ supporting trisections of multiple types}

Triangulation homeomorphic to $S^4$ with $15$ ts-tricolourings, supporting trisections of type $(0;0,0,0)$ ($\times 10$), $(1;1,0,0)$ ($\times 4$), and $(2;1,1,0)$  ($\times 1$). See Section~\ref{ssec:expts} for details.

\begin{verbatim}
gLAAMQbbcddeffffaaaaaaaaaaaaaaaaaaaa
\end{verbatim}

\end{document}